\documentclass[10pt,reqno]{amsart}
\usepackage{amssymb,color}
\usepackage[all]{xy}
\usepackage[lite,initials]{amsrefs}

\newenvironment{textequation}
{\begin{equation}\begin{minipage}{.87\textwidth}} {\end{minipage}
\end{equation}}

\DeclareMathOperator{\st}{st}
\DeclareMathOperator{\PC}{PC}
\DeclareMathOperator{\D}{D}
\DeclareMathOperator{\Dp}{D_P}

\DeclareMathOperator{\AC}{AC}
\DeclareMathOperator{\Conn}{Conn}

\DeclareMathOperator{\Ext}{Ext}
\DeclareMathOperator{\supp}{supp}
\DeclareMathOperator{\PR}{PR}
\DeclareMathOperator{\CIVP}{CIVP}
\DeclareMathOperator{\SCIVP}{SCIVP}
\DeclareMathOperator{\WCIVP}{WCIVP}
\DeclareMathOperator{\bor}{\mathcal{B}or}
\DeclareMathOperator{\bd}{bd}
\DeclareMathOperator{\CC}{C}
\DeclareMathOperator{\ES}{ES}
\DeclareMathOperator{\SES}{SES}
\DeclareMathOperator{\PES}{PES}
\DeclareMathOperator{\J}{J}

\DeclareMathOperator{\HL}{\mathcal{HL}}
\DeclareMathOperator{\mL}{m\mathcal{L}}
\DeclareMathOperator{\LL}{\mathcal L}

\newcommand{\real}{{\mathbb R}}

\newcommand{\integer}{{\mathbb Z}}

\newcommand{\A}{\mathcal A}
\newcommand{\B}{\mathcal B}

\newcommand{\F}{\mathcal F}
\newcommand{\G}{\mathcal G}
\newcommand{\e}{{\varepsilon}}
\newcommand{\charf}[1]{\mbox{\raise.48ex\hbox{$\chi$}$_{#1}$}}
\newcommand{\continuum}{\mathfrak c}

\newcommand{\la}{\langle}
\newcommand{\ra}{\rangle}
\newtheorem{theorem}{Theorem}[section]

\newtheorem{proposition}[theorem]{Proposition}
\newtheorem{lemma}[theorem]{Lemma}

\theoremstyle{definition}
\newtheorem{problem}[theorem]{Problem}

\newtheorem{remark}[theorem]{Remark}
\newtheorem{definition}[theorem]{Definition}
\begin{document}

\title[Lineability, spaceability, and additivity]{Lineability, spaceability, and additivity cardinals for Darboux-like functions}

\date{}

\author[Ciesielski]{Krzysztof Chris Ciesielski}
\address{Department of Mathematics,\newline \indent West Virginia University, Morgantown,\newline \indent WV 26506-6310, USA.\newline \indent
\textsc{ and }\newline \indent  \noindent Department of Radiology, MIPG,\newline \indent University of Pennsylvania,\newline \indent
Blockley Hall - 4th Floor, 423\newline \indent Guardian Drive,\newline \indent Philadelphia, PA 19104-6021, USA.}
\email{KCies@math.wvu.edu}

\author[G\'amez]{Jos\'{e} L.\ G\'{a}mez-Merino}
\address{Departamento de An\'{a}lisis Matem\'{a}tico,\newline\indent Facultad de Ciencias Matem\'{a}ticas, \newline\indent Plaza de Ciencias 3, \newline\indent Universidad Complutense de Madrid,\newline\indent Madrid, 28040, Spain.}
\email{jlgamez@mat.ucm.es}

\author[Pellegrino]{Daniel Pellegrino}
\address{Departamento de Matem\'{a}tica, \newline \indent Universidade Federal da Para\'{i}ba, \newline \indent 58.051-900 - Jo\~{a}o Pessoa, Brazil.}
\email{dmpellegrino@gmail.com and pellegrino@pq.cnpq.br}

\author[Seoane]{Juan B. Seoane-Sep\'{u}lveda}
\address{Departamento de An\'{a}lisis Matem\'{a}tico,\newline\indent Facultad de Ciencias Matem\'{a}ticas, \newline\indent Plaza de Ciencias 3, \newline\indent Universidad Complutense de Madrid,\newline\indent Madrid, 28040, Spain.}
\email{jseoane@mat.ucm.es}

\keywords{additivity, lineability, spaceability, 
Darboux-like functions, extendable functions}
\subjclass[2010]{15A03}

\begin{abstract}
We introduce the concept of {\em maximal lineability cardinal number},
$\mL(M)$, of a subset $M$ of a topological vector space and study its relation to the cardinal numbers
known as: additivity $A(M)$, homogeneous lineability $\HL(M)$, and lineability $\LL(M)$ of $M$.
In particular, we will describe, in terms of $\LL$, the lineability and spaceability of the
families of the following Darboux-like functions on $\real^n$, $n\ge 1$:
extendable, Jones, and almost continuous functions.
\end{abstract}

\maketitle

\section{Preliminaries and background}

The work presented here is a contribution to a recent ongoing research concerning the following general question: \emph{For an arbitrary subset $M$ of a vector space $W$, how big can be a vector subspace $V$ contained in $M\cup\{0\}$?} The current state of knowledge concerning this problem is described in the very recent survey article \cite{Surv}. So far, the term \emph{big}\/ in the question was understood as a cardinality of a basis of $V$; however, some other measures of bigness (i.e., in  a category sense) can also be considered.

Following \cites{arongurariyseoane2005,seoane2006} (see, also, \cite{enflogurariyseoane2012}), given a cardinal number $\mu$ we say that $M\subset W$ is \emph{$\mu$-lineable} if $M \cup \{0\}$ contains a vector subspace
$V$ of the dimension $\dim( V)=\mu$. Consider the following \emph{lineability}\/ cardinal number (see \cite{bgLAA}):
    \[
    \LL(M)=\min\{\kappa\colon
    \text{$M \cup \{0\}$ contains no vector space of dimension $\kappa$}\}.
    \]
Notice that $M\subset W$ is $\mu$-lineable if, and only if, $\mu<\LL(M)$. In particular, $\mu$ is the maximal dimension of a subspace of $M \cup \{0\}$ if, and only if, $\LL(M)=\nolinebreak\mu^+$. The number $\LL(M)$ need not be a cardinal successor
(see, e.g.,~\cite{arongurariyseoane2005}); thus, the maximal dimension of a subspace of $M \cup \{0\}$ does not necessarily exist.

If $W$ is a vector space over the field $K$ and $M\subset W$, let
    \[
    \st(M)=\{w\in W\colon (K\setminus\{0\}) w\subset M\}.
    \]
Notice that
    \begin{textequation}\label{eq1}
    if $V$ is a subspace of $W$, then $V\subset M\cup\{0\}$ if, and only if, $V\subset\st(M)\cup\{0\}$.
    \end{textequation}%
In particular,
    \begin{equation}\label{eq2}
    \LL(M)=\LL(\st(M)).
    \end{equation}
Recall also (see, e.g., \cite{gamezmunozsanchezseoane2010}) that a family $M\subset W$ is said to be {\em star-like}\/ provided $\st(M)=M$. Properties (\ref{eq1}) and (\ref{eq2}) explain why the assumption that $M$ is star-like appears in many results on lineability.

A simple use of Zorn's lemma shows that any linear subspace $V_0$ of $M \cup \{0\}$ can be extended to a maximal linear subspace $V$ of $M \cup \{0\}$. Therefore, the following concept is well defined.

\begin{definition}[maximal lineability cardinal number]
Let $M$ be any arbitrary subset of a vector space $W$. We define
\[
\mL(M)=\min\{\dim( V)\colon \mbox{$V$ is a maximal linear subspace of $M \cup \{0\}$}\}.
\]

\end{definition}

Although this notion might seem similar to that of maximal-lineability and maximal-spaceability (introduced by Bernal-Gonz\'alez in \cite{bernal2010}) they are, in general, not related.

In any case, (\ref{eq1}) implies that $\mL(M)=\mL(\st(M)).$

\begin{remark}
It is easy to see that $\HL(M)=\mL(M)^+$, where $\HL(M)$ is a homogeneous lineability number defined in \cite{bgLAA}.
(This explains why $\HL$ is always a successor cardinal, as shown in \cite{bgLAA}.) Clearly we have
$$
\HL(M)=\mL(M)^+\leq \LL(M).
$$
The inequality may be strict, as shown in \cite{bgLAA}.
\end{remark}

For $M\subset W$
we will also consider  the following \emph{additivity} number (compare~\cite{bgLAA}), which is a generalization of the notion introduced by T.~Natkaniec in \cites{natkaniec1991,natkaniec1996} and thoroughly studied by the first author \cites{STA,CJ,CM,CN,CR} and F.E.~Jordan \cite{jordan1998} for $V=\real^\real$ (see, also, \cite{B-G2010}):
\[
A(M,W)=\min\bigl(\{|F|\colon F\subset W\ \&\ (\forall w\in W)(w+F\not\subset M)\}\cup\{|W|^+\}\bigr),
\]
where $|F|$ is the cardinality of $F$ and $w+F=\{w+f\colon f\in F\}$.
Most of the times
the space \(W\), usually  \(W=\real^\real\), will be clear by the context. In such cases we will often write \(A(M)\)
in place of $A(M,W)$.

We are mostly interested in the topological vector spaces $W$. We say that $M\subset W$ is \emph{$\mu$-spaceable} with respect to a topology $\tau$ on $W$, provided there exists a $\tau$-closed vector space $V\subset M\cup\{0\}$ of dimension $\mu$. In particular, we can consider also the following \emph{spaceability}\/ cardinal number:
\[
\LL_\tau(M)=\min\{\kappa\colon \mbox{$M \cup \{0\}$ contains no $\tau$-closed subspace of dimension $\kappa$}\}.
\]
Notice that $\LL(M)=\LL_\tau(M)$ when $\tau$ is the discrete topology.

In what follows, we shall focus on spaces $W=\real^X$ of all functions from  $X=\real^n$ to $\real$
and consider the topologies $\tau_u$ and $\tau_p$
of uniform and pointwise convergence, respectively.
In particular,
we write $\LL_u(M)$ and $\LL_p(M)$ for $\LL_{\tau_u}(M)$ and $\LL_{\tau_p}(M)$, respectively.
Clearly
\[
\LL_p(M)\leq \LL_u(M)\leq \LL(M).
\]

Recall also a series of definitions that shall be needed throughout the paper.

\begin{definition}
For $X\subseteq\real^n$ a function $f\colon X\to\real$ is said to be
\begin{itemize}

\item \emph{Darboux\/} if $f[K]$ is a connected subset of $\real$ (i.e., an interval) for every connected subset $K$
of $X$;

\item \emph{Darboux\/} in the sense of Pawlak if $f[L]$ is a connected subset of $\real$ for every arc $L$ of $X$ (i.e., $f$ maps path connected sets into connected sets);

\item \emph{almost continuous\/} (in the sense of Stallings) if each open subset of $X\times\real$ containing the graph of $f$ contains also a continuous function from $X$ to~$\real$;

\item a \emph{connectivity\/} function if the graph of $f\restriction Z$ is connected in $Z\times\real$ for any connected subset $Z$ of~$X$;

\item \emph{extendable\/} provided that there exists a connectivity function $F\colon X\times[0,1]\to\real$  such that $f(x)=F(x,0)$ for every $x\in X$;

\item \emph{peripherally continuous\/} if for every $x\in X$ and for all pairs of open sets $U$ and $V$ containing $x$ and $f(x)$, respectively, there exists an open subset $W$ of $U$ such that $x \in W$ and $f[\bd (W)]\subset V$.
\end{itemize}

The above classes of functions are denoted by $\D(X)$, $\Dp(X)$, $\AC(X)$, $\Conn(X)$, $\Ext(X)$, and $\PC(X)$, respectively.
The class of continuous functions from $X$ into~$\real$ is denoted by $\CC(X)$. We will drop the domain~$X$ if $X=\real$.
\end{definition}

\begin{definition}
A function $f\colon \real^n\to\real$ is called
\begin{itemize}
\item \emph{everywhere surjective\/} if \(f[G]=\real\) for every nonempty open set \(G\subset\real^n\);
\item \emph{strongly everywhere surjective\/} if \(f^{-1}(y)\cap G\) has cardinality \(\continuum\) for every \(y\in\real\) and every nonempty open set \(G\subset\real^n\);
this class was also studied in \cite{CM}, under the name
of $\continuum$ strongly Darboux functions;

\item \emph{perfectly everywhere surjective\/} if \(f[P]=\real\) for every perfect set \(P\subset\real^n\)
(i.e., when $f^{-1}(r)$ is a Bernstein set for every $r\in\real$ (compare \cite{Ci1997}*{chap.~7}));
\item a \emph{Jones function\/} (see \cite{jones1942}) if \(f\cap F\neq\varnothing\) for every closed set \(F\subset\real^n\times\real\) whose projection on \(\real^n\) is uncountable.
\end{itemize}
The classes of these functions are written as \(\ES(\real^n)\), \(\SES(\real^n)\), \(\PES(\real^n)\),\linebreak and~\(\J(\real^n)\), respectively. We will drop the domain $\real^n$ if $n=1$.
\end{definition}

\begin{definition}
A function $f\colon\real\to\real$ has:
\begin{itemize}
\item the \emph{Cantor intermediate value property\/}
if for every $x,y\in\real$ and for each perfect set~$K$ between $f(x)$ and $f(y)$
there is a perfect set~$C$ between $x$ and $y$ such that $f[C]\subset K$;

\item
the \emph{strong Cantor intermediate value property\/} if
for every $x,y\in\real$ and for each perfect set $K$ between $f(x)$ and $f(y)$
 there is a perfect set $C$ between $x$ and $y$ such that
 $f[C]\subset K$ and $f\restriction C$ is continuous;

\item the \emph{weak Cantor intermediate value property\/} if for every $x,y\in\real$
with $f(x)<f(y)$ there exists a perfect set $C$ between
$x$ and $y$ such that $f[C]\subset (f(x),f(y))$;

\item \emph{perfect roads\/} if for every $x\in\real$ there exists a perfect set $P\subset\real$ having $x$ as a bilateral (i.e., two sided) limit point for which $f\restriction P$ is continuous at~$x$.
\end{itemize}
The above classes of functions shall be denoted by $\CIVP$, $\SCIVP$, $\WCIVP$, and $\PR$, respectively.
\end{definition}

Notice that all classes defined in the above three definitions are star-like.

The text is organized as follows. In Section~\ref{sec2} we study the relations between additivity and maximal lineability numbers. Sections~\ref{max_extend_R}  and~\ref{max_extend_Rn}  focus on the set of extendable functions on $\real$ and $\real^n$, respectively. Surprisingly enough, we shall obtain very different results when moving from $\real$ to $\real^n$. The lineability of some of the above functions have been recently partly studied (see, e.g., \cite{bgLAA,gamez2011,B-G2010,gamezmunozsanchezseoane2010}) but here we shall give definitive answers
concerning the lineability and spaceability of several previous studied classes.

\section{Relation between additivity and lineability numbers}\label{sec2}

The goal of this section is to examine possible values of numbers  $A(M)$, $\mL(M)$, and $\LL(M)$ for a subset $M$ of a linear space $W$ over an arbitrary field $K$. We will concentrate on the cases when $\varnothing\neq M\subsetneq W$, since it is easy for the cases $M\in\{\varnothing,W\}$. Indeed, as it can be easily checked, one has 
$A(\varnothing)=\LL(\varnothing)=1$ and $\mL(\varnothing)=0$;
$A(W)=|W|^+$, $\LL(W)=\dim(W)^+$, and $\mL(W)=\dim(W)$.

\begin{proposition}\label{prop21}
Let $W$ be a vector space over a field \(K\) and let $\varnothing\neq M\subsetneq\nolinebreak W$. Then
\begin{enumerate}
\item\label{prop1_ml_l} $2\leq A(M)\leq |W|$ and $\mL(M)<\LL(M)\leq \dim(W)^+$;
\item\label{prop1_A_ml} if $A(\st(M))>|K|$, then $A(\st(M))\le\mL(M)$.
\end{enumerate}
In particular, if $M$ is star-like, then $A(M)>|K|$ implies that
\begin{enumerate}\setcounter{enumi}2
\item $A(M)\le\mL(M)<\LL(M)\leq \dim(W)^+$.
\end{enumerate}
\end{proposition}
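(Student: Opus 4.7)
The plan is to prove (i) directly, to prove (ii) by constructing a proper enlargement of a maximal subspace and contradicting maximality, and to deduce (iii) by combining the two.

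For (i), $A(M) \ge 2$ because any singleton $F = \{f\}$ is beaten by $w := m - f$ for some $m \in M$, giving $w + F = \{m\} \subset M$; and $A(M) \le |W|$ because $F = W$ itself works, since $w + W = W \not\subset M$ for every $w$. A maximal subspace $V \subset M \cup \{0\}$ realizing $\dim V = \mL(M)$ is produced by Zorn's lemma, and its mere existence shows $\LL(M) > \mL(M)$; the bound $\LL(M) \le \dim(W)^+$ is immediate, since every subspace of $M \cup \{0\}$ is a subspace of $W$, hence of dimension at most $\dim W$.

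For (ii), use (\ref{eq1}) to identify the subspaces of $M \cup \{0\}$ with those of $\st(M) \cup \{0\}$, and pick a maximal $V \subset \st(M) \cup \{0\}$ with $\dim V = \mL(M)$. Assume toward a contradiction that $\dim V < A(\st(M))$. The degenerate case $V = W$ forces $M = W \setminus \{0\}$, giving $A(\st(M)) = |W|$ and $\mL(M) = \dim W$, so $A \le \mL$ drops out of cardinal arithmetic via $|W| > |K|$. Otherwise $V \subsetneq W$; pick $w_0 \in W \setminus V$ and probe the additivity of $\st(M)$ with $F := V \cup (V + w_0)$. The hypothesis $A(\st(M)) > |K|$, together with $\dim V < A(\st(M))$ and the standard estimate on $|V|$, yields $|F| \le 2|V| < A(\st(M))$, so by the definition of the additivity number some $w \in W$ satisfies $w + F \subset \st(M)$, i.e.\ both $w + V \subset \st(M)$ and $(w + w_0) + V \subset \st(M)$.

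Because $w_0 \notin V$, at least one of $w$ and $w + w_0$ lies outside $V$; call it $w'$. Then $w' + V \subset \st(M)$ and $w' \notin V$, and it remains to check $V + K w' \subset \st(M) \cup \{0\}$: for $\alpha = 0$ the element $v + \alpha w' = v$ lies in $V$, and for $\alpha \ne 0$ one writes $v + \alpha w' = \alpha(\alpha^{-1} v + w')$, where $\alpha^{-1} v + w' \in w' + V \subset \st(M)$, and closure of $\st(M)$ under multiplication by nonzero scalars places $\alpha(\alpha^{-1} v + w')$ in $\st(M)$. Hence $V + K w'$ is a subspace of $\st(M) \cup \{0\}$ strictly larger than $V$, contradicting its maximality. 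Part (iii) is then immediate: if $M$ is star-like, $A(M) = A(\st(M))$, so (ii) gives $A(M) \le \mL(M)$, and (i) supplies $\mL(M) < \LL(M) \le \dim(W)^+$.

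The main obstacle is arranging for the witness $w'$ to land outside $V$: the naive test set $F = V$ only produces some $w$, possibly inside $V$, with $w + V \subset \st(M)$, which would merely recover $V$. The doubled test set $V \cup (V + w_0)$ forces at least one of $w$, $w + w_0$ out of $V$, and the star-like structure of $\st(M)$ is precisely what allows the resulting one-dimensional extension to remain inside $\st(M) \cup \{0\}$.
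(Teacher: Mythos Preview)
Your argument is correct and considerably more explicit than the paper's, which for part~(ii) says only ``This can be proved by an easy transfinite induction'' and then invokes an external result, \cite{bgLAA}*{corollary~2.3}, stating that $A(M) < \HL(M)$ whenever $M$ is star-like with $A(M) > |K|$. Your direct extension step---probing additivity with the doubled test set $F = V \cup (V + w_0)$ so as to force the translate $w'$ outside $V$, and then using star-likeness of $\st(M)$ to absorb the scalar via $v + \alpha w' = \alpha(\alpha^{-1}v + w')$---is precisely the successor step of that transfinite induction, applied once to a maximal subspace of minimal dimension to produce a contradiction. The paper's route is shorter but opaque; yours is self-contained, and your closing paragraph explaining why the naive test set $F = V$ fails is a genuinely useful remark.

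One caveat worth recording: your ``standard estimate on $|V|$'', namely $|V| \le \max(|K|,\dim V)$ and hence $2|V| < A(\st(M))$, requires $K$ to be infinite. In fact part~(ii) as literally stated is false over finite fields: with $K = \mathbb{F}_2$, $W = K^2$, and $M = W \setminus \{0\}$ one has $\st(M) = M$, $A(\st(M)) = |W| = 4 > 2 = |K|$, yet $\mL(M) = \dim W = 2 < 4$. So the hypothesis ``$K$ infinite'' (explicit in the paper's subsequent Theorem~\ref{EX21} and Lemma~\ref{lemOplus1}) is tacitly needed here as well, and your argument covers exactly that case. Your handling of the degenerate case $V = W$ likewise uses $|W| > |K|$ to get $|W| = \dim W$, which again is valid only for infinite $K$.
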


\begin{proof}
\ref{prop1_ml_l}\quad These inequalities are easy to see.

\ref{prop1_A_ml} This can be proved by an easy transfinite induction. Alternatively, notice that  A.\ Bartoszewicz and S.\ G\l\c{a}b proved,
in \cite{bgLAA}*{corollary 2.3}, that if $M\subset W$ is star-like
and $A(M)>|K|$, then $A(M)<\HL(M)$.
Hence,
$A(\st(M))>|K|$ implies that $A(\st(M))<\HL(\st(M))=\mL(\st(M))^+=\mL(M)^+$.
Therefore,
$A(\st(M))\le\mL(M)$.
\end{proof}

In what follows, we will restrict our attention to the star-like families, since,
by Proposition~\ref{prop21}, other cases could be reduced to this situation.
Our next theorem shows that, for such families and under assumption that $A(M)>|K|$,
the inequalities (3) constitute all that can be said on these numbers.

\begin{theorem}\label{EX21}
Let $W$ be an infinite dimensional vector space over an infinite field $K$ and let $\alpha$, $\mu$, and $\lambda$ be the cardinal numbers such that
$|K|<\alpha\le\mu<\lambda\leq \dim(W)^+$.
Then there exists a star-like $M\subsetneq W$ containing $0$ such that
$A(M)=\alpha$, $\mL(M)=\mu$, and $\LL(M)=\lambda$.
\end{theorem}

The proof of this theorem will be based on the following two lemmas. The first of them
shows that the theorem holds when $\alpha=\mu$, while the second
shows how such an example can be modified to the general case.

\begin{lemma}\label{lemOplus1}
Let $W$ be an infinite dimensional vector space over an infinite field  $K$ and let $\mu$ and~$\lambda$ be the cardinal numbers such that
$|K|<\mu<\lambda\leq \dim(W)^+$.
Then there exists a star-like $M\subsetneq W$ containing $0$ such that
$A(M)=\mL(M)=\mu$ and $\LL(M)=\lambda$.
\end{lemma}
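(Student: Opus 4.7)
My plan is to realize $M$ as the union of the complement of a fixed codimension-$\mu$ subspace of $W$ with a family of subspaces contained inside it: the complement governs the additivity, while the internal subspaces govern the lineability invariants. First, fix a Hamel basis $B$ of $W$ and partition it as $B = B_0 \sqcup B_1^{**} \sqcup B_1^{*}$ with $|B_0| = |B_1^{**}| = \mu$ and $|B_1^{*}|$ large enough to host the $\LL$-producing subspaces described next; this can be arranged since $\dim(W)^{+} \ge \lambda$. (The degenerate case $\dim W = \mu$ forces $\lambda = \mu^{+}$ and is handled separately by taking $M = W \setminus Kv_0$ for any nonzero $v_0$.) Set $V_0 = \operatorname{span}(B_0)$, $N^{*} = \operatorname{span}(B_1^{*})$, and $N = \operatorname{span}(B_1^{*} \cup B_1^{**})$; then $W = V_0 \oplus N$ with $\operatorname{codim}_W N = \mu$ and $\dim(N/N^{*}) = \mu$. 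Inside $B_1^{*}$ pick pairwise disjoint subsets $\{B_\nu : \nu \in S\}$ with $|B_\nu| = \nu$, where $S = \{\mu,\lambda-1\}$ for successor $\lambda > \mu^{+}$, $S = \{\mu\}$ for $\lambda = \mu^{+}$, and $S$ is a cofinal subset of $[\mu,\lambda)$ of cardinals containing $\mu$ for $\lambda$ limit. Set $T_\nu = \operatorname{span}(B_\nu)$ and
\[
M = (W \setminus N) \cup \bigcup_{\nu \in S} T_\nu.
\]
Then $M$ is star-like, contains $0$, and is proper in $W$ since $N \setminus N^{*}$ is nonempty and disjoint from $M$.

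To compute $\mL(M)$ and $\LL(M)$, I would show that any subspace $U \subseteq M \cup \{0\}$ has $U \cap N$ trapped inside a single $T_\nu$: if $u_1 \in T_{\nu_1} \cap U$ and $u_2 \in T_{\nu_2} \cap U$ were nonzero with $\nu_1 \neq \nu_2$, the basis support of $u_1 + u_2$ would meet both disjoint sets $B_{\nu_1}$ and $B_{\nu_2}$, yet $u_1 + u_2 \in \bigcup T_\eta$ forces that support inside a single $B_\eta$, a contradiction. Consequently the maximal subspaces of $M \cup \{0\}$ are precisely the $T_\nu \oplus U'$ for a complement $U'$ of $N$ in $W$, each of dimension $\nu + \mu = \nu$. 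Taking minima and suprema over $\nu \in S$ gives $\mL(M) = \mu$ and $\LL(M) = \lambda$ in all cases (the supremum is attained at $\lambda - 1$ for successor $\lambda$ and is $\lambda$ unattained for limit $\lambda$).

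The heart of the proof is $A(M) = \mu$. For $A(M) \ge \mu$: given $F \subset W$ with $|F| < \mu$, pick $w \in V_0 \setminus \{-f_V : f \in F\}$ (possible since $|V_0| = \mu > |F|$, where $f = f_V + f_N$ is the decomposition along $V_0 \oplus N$); then the $V_0$-component of $w+f$ is nonzero, so $w+f \in W \setminus N \subseteq M$. For $A(M) \le \mu$: I would exhibit a bad $F$ of size $\mu$ by enumerating, for every $v_0 \in V_0$, a family of $\mu$ elements $n \in N$ lying in $\mu$ pairwise distinct cosets of $N^{*}$ in $N$ (possible because $|N/N^{*}| = \mu$), and placing each $v_0 + n$ into $F$; then $|F| = \mu \cdot \mu = \mu$. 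For any candidate $w = w_V + w_N$, the elements $f \in F$ with $f_V = -w_V$ give shifts $w_N + n$ in $\mu$ distinct cosets of $N^{*}$; at most one such coset equals $N^{*}$ itself, so some $f$ yields $w+f \in N \setminus N^{*}$, which is disjoint from $\bigcup T_\nu \subseteq N^{*}$ and hence from $M$. The point I expect to be trickiest is the single-$T_\nu$ step, since the standard pigeonhole argument for subspaces in unions needs $|K| \ge |S|$ which can fail badly here; working with disjoint basis supports rather than scalar combinations bypasses the obstacle, motivating the specific choice of $T_\nu$ as spans of disjoint basis subsets.
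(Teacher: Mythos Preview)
Your argument is correct and the construction works; the only slip is in the degenerate case $\dim W=\mu$, where $M=W\setminus Kv_0$ omits $0$, contrary to the statement. Replacing it by $M=(W\setminus Kv_0)\cup\{0\}$ (and, for $A(M)\le\mu$, taking two representatives per coset of $Kv_0$) fixes this at no cost. In fact your main construction already covers $\dim W=\mu$, since a set of size $\mu$ can be split into three pieces of size $\mu$ and then $S=\{\mu\}$ fits inside $B_1^{*}$; the separate case is unnecessary.

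Your route is genuinely different from the paper's. The paper builds $M=\mathcal A\cup\bigcup_{\eta<\lambda}V(B_\eta)$, where $\mathcal A$ is the set of vectors having two equal basis coefficients, one supported in $B_0$ and one outside it; this single set handles all $\lambda$ uniformly, but the verification that $\LL(M)\le\lambda$ requires a short transfinite search for a vector with split support, and $A(M)\le\mu$ is not checked directly but deduced from $A(M)\le\mL(M)$ via Proposition~\ref{prop21}. Your replacement of $\mathcal A$ by the complement $W\setminus N$ of a codimension-$\mu$ subspace is more geometric and lets you characterise \emph{all} maximal subspaces of $M$ as $U'\oplus T_\nu$ with $U'$ a complement of $N$; this makes both $\mL(M)=\mu$ and $\LL(M)=\lambda$ immediate and yields an explicit witness for $A(M)\le\mu$. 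The price is the successor/limit case split on $\lambda$ in the choice of $S$, which the paper's uniform indexing by all cardinals below $\lambda$ avoids. Both arguments hinge on the same disjoint-support trick for forcing a subspace of $M\cap N$ into a single $T_\nu$, which is exactly the obstacle you anticipated.
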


\begin{proof} 
For $S\subset W$, let $V(S)$ be the vector
subspace of $W$ spanned by $S$.

Let $B$ be a basis for $W$. For $w\in W$, let $\supp(w)$ be the smallest subset $S$ of $B$ with $w\in V(S)$
and let $c_w\colon \supp(w)\to K$ be such that $w=\sum_{b\in \supp(w)}c_w(b) b$.
Let $E$ be the set of all cardinal numbers less than~$\lambda$
and choose a sequence $\la B_\eta\colon \eta\in E\ra$ of pairwise disjoint subsets of $B$
such that $|B_0|=\mu$ and
$|B_\eta|=\eta$ whenever 
$0\neq\eta\in E$.
Define
\[
M=\A\cup\smash[b]{\bigcup_{\eta\in E}}V(B_\eta),
\]
where
    \begin{align*}
    \A=\{&w\in W\colon\\ 
    &c_w(b_0)=c_w(b_1) \text{ for some
    $b_0\in \supp(w)\cap B_0$, $b_1\in \supp(w)\setminus B_0$}\}.
    \end{align*}
We will show that $M$ is as desired.

Clearly, $M$ is star-like and $0\in M\subsetneq W$.
Also,  $\LL(M)\geq\lambda$, since for any cardinal $\eta<\lambda$
the set $M$ contains a vector subspace $V(B_\eta)$ with $\dim(V(B_\eta))\geq\eta$.

To see that
$A(M)\geq\mu$, choose an $F\subset W$ with $|F|<\mu$.
It is enough to show that $|F|< A(M)$, that is, that there exists a $w\in W$ with $w+F\subset \A$.
As $\supp(F)=\bigcup_{v\in F}\supp(v)$ has cardinality at most $|F|+\omega<\mu=|B_0|<\lambda\le|B\setminus B_0|$,
there exist $b_0\in B_0\setminus \supp(F)$ and $b_1\in B\setminus(B_0\cup\supp(F))$.
Let $w=b_0+b_1$ and notice that
$w+F\subset\A\subset M$, since for every $v\in F$
we have $b_0\in\supp(w+v)\cap B_0$,
$b_1\in\supp(w+v)\setminus B_0$, and  $c_{w+v}(b_0)=1=c_{w+v}(b_1)$.

Next notice that the inequalities $|K|<\mu\leq A(M)$ and Proposition~\ref{prop21}
imply that $\mu\leq A(M)\leq\mL(M)$. Thus, to finish the proof, it is enough to show that
$\mL(M)\leq\mu$ and $\LL(M)\leq\lambda$.

To see that $\mL(M)\leq\mu$, it is enough to show that $V(B_0)$
is a maximal vector subspace of $M$.
Indeed, if $V$ is a vector subspace of  $W$ properly containing $V(B_0)$, then
there exists a non-zero $v\in V\cap V(B\setminus B_0)$. Choose a $b_0\in B_0$ and
a non-zero $c\in K\setminus\{c_v(b)\colon b\in\supp(v)\}$. Then  $c b_0+v\in V\setminus M$. So,
$V(B_0)$
is a maximal vector subspace of $M$ and indeed $\mL(M)\leq\dim(V(B_0))=\kappa$.

To see that $\LL(M)\leq\lambda$, choose a vector subspace $V$ of $W$ of dimension $\lambda$.
It is enough to show that $V\setminus M\neq\varnothing$.
To see this, for every ordinal $\eta\leq\lambda$ let us define
$\hat B_\eta=\bigcup\{B_\zeta\colon \zeta\in E\cap \eta\}$. Notice that
\begin{equation*}
\text{for every $\eta<\lambda$ there is a non-zero $w\in V$ with $\supp(w)\cap \hat B_\eta=\varnothing$.}
\end{equation*}
Indeed, if $\pi_\eta\colon W=V(\hat B_\eta)\oplus V(B\setminus\hat B_\eta)\to V(\hat B_\eta)$ is the natural projection, then
there exist distinct $w_1,w_2\in V$ with $\pi_\eta(w_1)=\pi_\eta(w_2)$ (as $|V(\hat B_\eta)|<\lambda=\dim(V)$).
Then $w=w_1-w_2$ is as required.

Now, choose a non-zero $w_1\in V$ with $\supp(w_1)\cap B_0=\supp(w_1)\cap \hat B_1=\varnothing$.
Then, $w_1\notin\A$ and
if $\supp(w_1)\not\subset \hat B_\lambda=\bigcup_{\eta\in E}B_\eta$, then also $w_1\notin \bigcup_{\eta\in E}V(B_\eta)$,
and we have $w_1\in V\setminus M$.
Therefore, we can assume that  $\supp(w_1)\subset \hat B_\lambda=\bigcup_{\eta<\lambda}\hat B_\eta$.
Let $\eta<\lambda$ be such that $\supp(w_1)\subset \hat B_\eta$
and choose a non-zero $w_2\in V$ with $\supp(w_2)\cap \hat B_\eta=\varnothing$.
Then $w=w_2-w_1\in V\setminus M$ (since $w\notin \A$, being non-zero with $\supp(w)\cap B_0=\varnothing$,
and $w\notin \bigcup_{\eta\in E}V(B_\eta)$, as its support intersects two different $B_\eta$).
\end{proof}

\begin{lemma}\label{lemOplus}
Let \(W\), \(W_0\), and \(W_1\) be the vector spaces over an infinite field \(K\) such that $W=W_0\oplus W_1$.
Let $M\subsetneq W_0$ and 
    \[
    \F=M + W_1=\{m+W\colon m\in M\ \&\ w\in W_1\}.
    \]
Then
\begin{enumerate}
\item\label{lemoplus_star} If \(M\) is star-like, then \(\F\) is also star-like.
\item\label{lemoplus_add} $A(\F,W)=A(M,W_0)$.
\item\label{lemoplus_ml} If \(0\in M\), then $\mL(\F)=\mL(M)+\dim(W_1)$.
\item\label{lemoplus_ll} If \(0\in M\) and $\dim(W_1)<\LL(M)$, then $\LL(\F)=\LL(M)+\dim(W_1)$.
\end{enumerate}
\end{lemma}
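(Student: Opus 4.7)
My plan is to prove the four claims in order, each time exploiting the unique decomposition $w = \pi_0(w) + \pi_1(w)$ along $W = W_0 \oplus W_1$ together with the identity $\F \cap W_0 = M$ (which uses $0 \in M$ in parts (iii) and (iv)). Parts (i) and (ii) are routine direct checks: for (i), the inclusion $\F \subseteq \st(\F)$ follows from $\st(M) = M$ coordinatewise, and $\st(\F) \subseteq \F$ follows by applying decomposition uniqueness to $cw \in \F$ for $c \neq 0$; for (ii), I would transport witnesses by $F \leftrightarrow \pi_0(F)$, observing that $w_0 + \pi_0(F) \subset M$ if and only if $w_0 + F \subset \F$ (in the latter direction again by decomposition uniqueness), and that $|\pi_0(F)| \leq |F|$.

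Part (iii) is the crux, and my strategy is to set up a bijection between maximal subspaces of $M \cup \{0\}$ and of $\F \cup \{0\}$ given by $V_0 \mapsto V_0 \oplus W_1$. For the inequality $\mL(\F) \leq \mL(M) + \dim W_1$, I take a maximal $V_0 \subset M \cup \{0\}$ of dimension $\mL(M)$ and argue that $V_0 \oplus W_1$ is maximal in $\F \cup \{0\}$: any strictly larger $V'$ contains some $v = v_0 + v_1$ with $v_0 \notin V_0$, and by maximality of $V_0$ some combination $v'' + k v_0$ (with $v'' \in V_0$ and $k \in K$) lies outside $M \cup \{0\}$; then $v'' + k v \in V'$ has $W_0$-component $v'' + k v_0 \notin M$, so lies outside $\F \cup \{0\}$. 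For the reverse inequality, any maximal $V \subset \F \cup \{0\}$ must contain $W_1$ (since $W_1 \subset \F$ because $0 \in M$, and $V + W_1 \subset \F$ by the same star-like computation as in (i)); once $W_1 \subset V$, the inclusion $\pi_0(V) \subset V$ follows and gives $V = \pi_0(V) \oplus W_1$, with $\pi_0(V) \subset M \cup \{0\}$ forced to be maximal, whence $\dim V \geq \mL(M) + \dim W_1$.

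For (iv), the same projection bounds cut both ways: any subspace $V \subset \F \cup \{0\}$ has $\pi_0(V) \subset M \cup \{0\}$ (by decomposition uniqueness applied to each nonzero $v \in V$), so $\dim \pi_0(V) < \LL(M)$, and from $V/(V \cap W_1) \cong \pi_0(V)$ we get $\dim V < \LL(M) + \dim W_1$; conversely, for each $\mu < \LL(M)$ a subspace $V_0 \subset M \cup \{0\}$ of dimension $\mu$ produces $V_0 \oplus W_1 \subset \F \cup \{0\}$ of dimension $\mu + \dim W_1$, and the hypothesis $\dim W_1 < \LL(M)$ guarantees that these dimensions are cofinal below $\LL(M) + \dim W_1$. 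The main obstacle I anticipate is part (iii): both directions hinge on the fact that every maximal subspace of $\F \cup \{0\}$ absorbs all of $W_1$, and the $\leq$ direction requires producing a vector whose $W_0$-component actually fails membership in $M$ (not merely in $V_0$), which is what forces the careful use of maximality of $V_0$ rather than just its being a subspace of $M \cup \{0\}$.
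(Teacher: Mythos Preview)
Your proposal is correct and follows essentially the same route as the paper: both arguments hinge on the projection $\pi_0\colon W\to W_0$, the identity $\pi_0[\F]=M$, and (for part~(iii)) the observation that every maximal subspace of $\F\cup\{0\}$ must absorb $W_1$, yielding the bijection $V_0\leftrightarrow V_0\oplus W_1$ between maximal subspaces of $M\cup\{0\}$ and of $\F\cup\{0\}$. The only cosmetic differences are that the paper isolates this bijection as a separate displayed claim before deducing both inequalities in~(iii), and in~(iv) it replaces $V$ by $V+W_1$ rather than invoking $V/(V\cap W_1)\cong\pi_0(V)$; also note that your upper bound $\dim V<\LL(M)+\dim(W_1)$ in~(iv) tacitly uses the hypothesis $\dim(W_1)<\LL(M)$ (otherwise $\dim\pi_0(V)<\LL(M)$ does not force the strict inequality), which the paper makes explicit.
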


\begin{proof}
In the following, let \(\pi_0\colon W=W_0\oplus W_1\to W_0\) be the canonical projection.

\ref{lemoplus_star}\quad Let \(x\in\F\) and \(\lambda\in K\setminus\{0\}\). Since \(M\) is star-like and \(\pi_0(x)\in M\), we have that \(\pi_0(\lambda x)=\lambda\pi_0(x)\in M\), and hence \(\lambda x\in M+W_1=\F\).

\ref{lemoplus_add}\quad Let us see that $A(M,W_0)\leq A(\F,W)$. To this end, let \(\kappa<A(M,W_0)\).
We need to prove that \(\kappa<A(\F,W)\). Indeed, if \(F\subset W\) and \(|F|=\kappa\), then \(|\pi_0[F]|\le|F|=\kappa\).
So, there exists a \(w_0\in W_0\) such that \(\pi_0[w_0+F]=w_0+\pi_0[F]\subset M\), that is, \(w_0+F\subset M+W_1=\F\). Therefore, \(\kappa<A(\F,W)\).

To see that $A(\F,W)\leq A(M,W_0)$ let $\kappa<A(\F,W)$.  We need to show that $\kappa<A(M,W_0)$.
Indeed, let $F\subset W_0$ be such that $|F|=\kappa$.
Since $|F|<A(\F,W)$, there is a $w\in W$ with $w+F\subset \F$. Then \(\pi_0(w)\in W_0\)
and \(\pi_0(w)+F=\pi_0[w+F]\subset\pi_0[\F]=M\), so indeed $\kappa<A(M)$.

\ref{lemoplus_ml}\quad First notice that it is enough to show that
\begin{textequation}\label{maximal_is_sum}
$V$ is a maximal vector subspace of $\F$ if, and only if, $V=V_0+W_1$, where
$V_0$ is a maximal vector subspace of $M$.
\end{textequation}
Indeed, if $V$ is a maximal vector subspace of $\F$ with $\mL(\F)=\dim(V)$, then, by \eqref{maximal_is_sum},
$\mL(\F)=\dim(V)=\dim(V_0)+\dim(W_1)\geq \mL(M)+\dim(W_1)$.
Conversely, if $V_0$ is a maximal vector subspace of $M$ with $\mL(M)=\dim(V_0)$, then we have
$\mL(M)+\dim(W_1)=\dim(V_0)+\dim(W_1)=\dim(V_0+W_1)\geq \mL(\F)$.

To see \eqref{maximal_is_sum}, take a maximal vector subspace $V$ of $\F$.
Notice that $W_1\subset V$, since
$V\subset V+W_1\subset \F+W_0=\F$ and so, by the maximality, $V+W_1=V$.
In particular, $V=V_0+W_1\subset\F=M+W_1$, where $V_0=\pi_0[V]$.
Thus, $V_0$ is a vector subspace of $M$. It must be maximal, since for any its proper extension $\hat V_0\subset M$,
the vector space
$\hat V_0+W_1\subset\F$ would be a proper extension of $V$.

Conversely, if $V_0$ is a maximal vector subspace of $M$, then $V=V_0+W_1$ is a vector subspace of $\F$.
If cannot have a proper extension $\hat V\subset \F$, since then the vector space $\pi_0[\hat V]\subset M$
would be a proper extension of $V_0$.

\ref{lemoplus_ll}\quad To see that $\LL(\F)\leq\dim(W_1)+\LL(M)$, choose a vector space $V\subset\F$.
We need to show that $\dim(V)<\dim(W_1)+\LL(M)$.
Indeed, $V_1=V+W_1$ is a vector subspace of $\F+W_1=\F$ and
$\dim(V)\leq\dim(V_1)=\dim(W_1)+\dim(\pi_0[V_1])$, since
$V_1=W_1\oplus \pi_0[V_1]$.
Therefore, $\dim(V)\leq\dim(W_1)+\dim(\pi_0[V_1])<\dim(W_1)+\LL(M)$,
since $\dim(W_1)<\LL(M)$
and
$\dim(\pi_0[V_1])<\LL(M)$, as $\pi_0[V_1]$ is a vector subspace of $M=\pi_0[\F]$.
So, $\LL(\F)\leq\dim(W_1)+\LL(M)$.

To see that $\dim(W_1)+\LL(M)\leq\LL(\F)$,
choose a $\kappa<\dim(W_1)+\LL(M)$. We need to show that $\kappa<\LL(\F)$,
that is, that there exists a vector subspace $V$ of~$\F$ with $\dim(V)\geq\kappa$.
First, notice that $\dim(W_1)<\LL(M)$ and $\kappa<\dim(W_1)+\LL(M)$
imply that there exists a $\mu<\LL(M)$ such that $\kappa\leq\dim(W_1)+\mu<\dim(W_1)+\LL(M)$.
(For finite value of $\LL(M)$, take $\mu=\max\{\kappa-\dim(W_1),0\}$;
for infinite $\LL(M)$, the number $\mu=\max\{\kappa,\dim(W_1)\}$ works.)
Choose a vector subspace $V_0$ of $M$ with $\dim(V_0)\geq\mu$.
Then the vector subspace $V=V_0+W_1=V_0\oplus W_1$ of $\F$ is as desired,
since we have $\dim(V)=\dim(W_1)+\dim(V_0)\geq\dim(W_1)+\mu\geq\kappa$.
\end{proof}

\noindent
{\em Proof of Theorem~\ref{EX21}.}
Represent $W$ as $W_0\oplus W_1$, where $\dim(W_0)=\lambda$ and $\dim(W_1)=\mu$.
Use Lemma~\ref{lemOplus1} to find
a star-like $M\subsetneq W_0$ containing $0$ such that
$A(M,W_0)=\mL(M)=\alpha$ and $\LL(M)=\lambda$.
Let $\F=M+W_1\subsetneq B$. Then, by Lemma~\ref{lemOplus},
$\F\ni 0$ is star-like such that
$A(\F)=A(M,W_0)=\alpha$, $\mL(\F)=\mL(M)+\dim(W_1)=\alpha+\mu=\mu$,
and $\LL(\F)=\LL(M)+\dim(W_2)=\lambda+\alpha=\lambda$,
as required. \qed

\medskip

A.\ Bartoszewicz and S.\ G\l\c{a}b have asked
\cite{bgLAA}*{open question 1}
whether the inequality $A(\F)^+\geq \HL(\F)$
(which is equivalent to $A(\F)\geq \mL(\F)$)
holds for any family $\F\subset\real^\real$.
Of course, for the star-like
families $\F$ with $A(\F)>\continuum$, a positive answer to this question
would mean that, under these assumptions, we have $A(\F)=\mL(\F)$.
Notice that Theorem~\ref{EX21} gives, in particular,
a negative answer to this question.

We do not have a comprehensive example, similar to that provided by Theorem~\ref{EX21},
for the case when $A(M)\leq|K|$.
However, the machinery built above, together with the results from \cite{bgLAA},
lead to the following result.

\begin{theorem}
Let \(W\) a vector space over an infinite field~\(K\) with \(\dim(W)\ge 2^{|K|}\).
If $2\le\kappa\le |W|$, there exists a star-like family $\F\subsetneq W$ containing $0$
such that $A(\F)=\kappa$ and $\mL(\F)=\dim(W)$ (so that $\LL(\F)=\dim(W)^+$).
\end{theorem}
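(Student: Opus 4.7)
The plan is to parallel the proof of Theorem~\ref{EX21}. Since the hypothesis $\dim(W)\ge 2^{|K|}>|K|$ forces $\dim(W)$ to be infinite and $|W|=\dim(W)$, I can decompose $W=W_0\oplus W_1$ with $\dim(W_1)=\dim(W)$ and $\dim(W_0)$ any cardinal I please up to $\dim(W)$. Granted a star-like $M_0\subsetneq W_0$ that contains $0$ and satisfies $A(M_0,W_0)=\kappa$, I would take $\F=M_0+W_1$; Lemma~\ref{lemOplus} then delivers $\F$ star-like, $0\in\F\subsetneq W$, $A(\F)=\kappa$, and
\[
\mL(\F)=\mL(M_0)+\dim(W_1)=\dim(W),
\]
the last equality because $\mL(M_0)\le\dim(W_0)\le\dim(W)=\dim(W_1)$. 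From $\mL(\F)=\dim(W)$ one reads off $\HL(\F)=\dim(W)^+\le\LL(\F)\le\LL(W)=\dim(W)^+$, so $\LL(\F)=\dim(W)^+$, completing the conclusion.

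The entire problem is thus reduced to building, for each admissible $\kappa$, a star-like proper subset $M_0$ of some $W_0\subseteq W$ containing $0$ with $A(M_0,W_0)=\kappa$, and here I would split into two cases. If $|K|<\kappa\le|W|$, I would set $\dim(W_0)=\kappa$ and apply Lemma~\ref{lemOplus1} with $\mu=\kappa$ and $\lambda=\kappa^+$, whose hypotheses $|K|<\mu<\lambda\le\dim(W_0)^+$ are exactly met. If instead $2\le\kappa\le|K|$, Lemma~\ref{lemOplus1} is inapplicable and I would appeal to the small-additivity constructions of Bartoszewicz and G\l\c{a}b in \cite{bgLAA}, which, for every $\kappa$ in this range, produce a star-like family over $K$ of additivity exactly $\kappa$ inside a vector space whose cardinality is bounded by $2^{|K|}$. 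The hypothesis $\dim(W)\ge 2^{|K|}$ is used precisely here, to embed that ambient space as a subspace $W_0$ of $W$ (any $K$-vector space of cardinality at most $2^{|K|}$ has dimension at most $2^{|K|}\le\dim(W)$, hence sits as a subspace of $W$).

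The main obstacle, therefore, is packaging the construction of \cite{bgLAA} into a field-agnostic statement: the paper works in $\real^\real$, and one has to verify that the same recipe yields a star-like family of prescribed additivity $\kappa\le|K|$ in an arbitrary infinite-field vector space whose cardinality is bounded by $2^{|K|}$. Once that is in place, the rest of the argument is just an application of Lemma~\ref{lemOplus} and is essentially identical to the final assembly step in the proof of Theorem~\ref{EX21}.
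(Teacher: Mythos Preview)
Your proposal is correct and follows essentially the same route as the paper: decompose $W=W_0\oplus W_1$ with $\dim(W_1)=\dim(W)$, build a star-like $M_0\subsetneq W_0$ with $0\in M_0$ and $A(M_0,W_0)=\kappa$, and then set $\F=M_0+W_1$ and read off the cardinals via Lemma~\ref{lemOplus}.

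Two remarks. First, the ``obstacle'' you flag is not one: \cite{bgLAA}*{Theorem~2.5} is already stated for an arbitrary infinite field $K$ and produces, for each $2\le\kappa\le|K|$, a star-like family (containing $0$) with additivity exactly $\kappa$ inside a $K$-vector space of dimension $2^{|K|}$. The paper simply cites that result with $\dim(W_0)=2^{|K|}$; no repackaging from $\real^\real$ is required. Second, the paper's written proof handles only the range $2\le\kappa\le|K|$, because the range $|K|<\kappa\le|W|=\dim(W)$ is already covered by Theorem~\ref{EX21} (take $\alpha=\kappa$, $\mu=\dim(W)$, $\lambda=\dim(W)^+$). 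Your alternative of re-deriving that case via Lemma~\ref{lemOplus1} with $\dim(W_0)=\kappa$ is fine but redundant. Your deduction of $\LL(\F)=\dim(W)^+$ from $\mL(\F)=\dim(W)$ via $\HL(\F)\le\LL(\F)\le\dim(W)^+$ is the intended one.
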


\begin{proof}
Represent $W$ as \(W=W_0\oplus W_1\),
where \(\dim(W_0)=2^{|K|}\)
and \(\dim(W_1)=\dim(W)\).
If \(2\le\kappa\le|K|\), then, by \cite{bgLAA}*{Theorem~2.5}, there exists a star-like family \(M\subset W_0\) such that \(A(M,W_0)=\kappa\). Notice that the family constructed in that result contains \(0\).
Then, by Lemma~\ref{lemOplus}, the family \(\F=M+W_1\) satisfies that \(A(\F)=A(M,W_0)=\kappa\)
and $\mL(\F)=\mL(M)+\dim(W_1)=\dim(W)$.
\end{proof}

\section{Spaceability of  Darboux-like functions on $\real$\label{max_extend_R}}

Recall (see, e.g., \cite{CJ}*{chart~1} or \cite{STA}) that we have the following strict inclusions, indicated by the arrows, between the Darboux-like functions from~$\real$
to $\real$.
The next theorem, strengthening the results presented in the table from \cite{Surv}*{page~14},
determines fully the lineability, $\LL$, and spaceability, $\LL_p$, numbers for these classes.

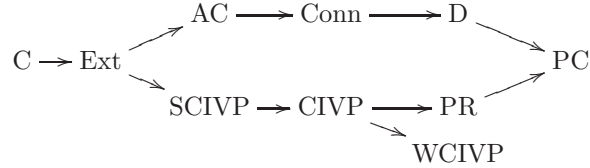
\begin{figure}[h]
\[\xymatrix @R=.4pc @C=1pc
{
               &                          & \AC\ar[r]      &\Conn\ar[r]          &\D\ar[dr]      \\
\CC\ar[r] & \Ext\ar[ur]\ar[dr]&                     &                           &                      &\PC\\
              &                           &\SCIVP\ar[r] &\CIVP\ar[r]\ar[dr] &\PR\ar[ur]    \\
              &                           &                    &                           &\WCIVP
}\]
\bigskip
\caption{Relations between the Darboux-like classes of functions from  \(\real\) to  \(\real\).
Arrows indicate strict inclusions.}\label{Fig1}
\end{figure}

\begin{theorem}\label{th1}
$\LL_p(\Ext)=\left(2^{\continuum}\right)^+$. In particular, all Darboux-like classes of functions from Figure~\ref{Fig1},
except $\CC$, are $2^{\continuum}$-spaceable with respect to the topology of pointwise convergence.
\end{theorem}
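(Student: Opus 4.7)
The upper bound is immediate: since $|\real^\real|=2^\continuum$, we have $\dim(\real^\real)=2^\continuum$, so every vector subspace of $\real^\real$---and in particular every $\tau_p$-closed subspace contained in $\Ext\cup\{0\}$---has dimension at most~$2^\continuum$. Hence $\LL_p(\Ext)\le\LL(\Ext)\le(2^\continuum)^+$. The ``in particular'' clause then follows from Figure~\ref{Fig1}, because every class shown there except $\CC$ contains $\Ext$, so a $\tau_p$-closed witness for $\Ext$ simultaneously witnesses $2^\continuum$-spaceability for all of them.

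The heart of the proof is the lower bound: construct a $\tau_p$-closed $V\subseteq\Ext\cup\{0\}$ with $\dim V=2^\continuum$. My plan exploits two structural features of $\Ext$. First, $\Ext+\CC\subseteq\Ext$ (adding a continuous function preserves extendability, since the canonical extension $F\colon\real\times[0,1]\to\real$ can be translated by a continuous lift without losing connectivity). Second, $\Ext$ contains functions whose graph is so ``wild'' (e.g., Jones-type) that they can absorb large prescribed modifications on small sets without losing extendability. Concretely, I would fix a partition $\real=\bigsqcup_{\xi<\continuum}B_\xi$ into $\continuum$-many pairwise disjoint Bernstein sets, pick a single base function $h_0\in\Ext$ with suitably dense graph, and, for each $\xi<\continuum$ and each $\varphi\in\real^{B_\xi}$, build by a Jones-type transfinite recursion an extendable function $h_{\xi,\varphi}$ that carries the value~$\varphi(x)$ at each $x\in B_\xi$ and agrees with $h_0$ outside $B_\xi$ up to a continuous correction.

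The candidate subspace is the collection $V$ of all $f\in\real^\real$ that satisfy the pointwise linear constraints encoded by the recursion—``$f-h_0$ restricted to each $B_\xi$ lies in the prescribed $\real$-linear block, and $f-h_0$ vanishes on the control set used in the recursion.'' Being a pointwise-linear definition, $V$ is an intersection of kernels of $\tau_p$-continuous evaluation functionals, hence $\tau_p$-closed. A dimension count then gives $\dim V=\continuum\cdot 2^\continuum=2^\continuum$, because each of the $\continuum$ Bernstein blocks contributes an independent $2^\continuum$-dimensional family of free values, and these families are independent across different $\xi$.

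The principal obstacle is verifying $V\setminus\{0\}\subseteq\Ext$. Extendability is not preserved under pointwise limits in general, so the construction of the $h_{\xi,\varphi}$ must be rigid enough that \emph{any} function in $V$—not merely finite linear combinations of the $h_{\xi,\varphi}$—is still extendable. The key local check uses the characterization of $\Ext$ via peripheral continuity plus the existence of a connectivity extension to $\real\times[0,1]$: one shows that the prescribed values on the Bernstein blocks, together with the continuous glue inherited from $h_0$, can be simultaneously interpolated by a single connectivity function on the strip. This is precisely where the density/richness of the graph of $h_0$ is used, and it is the step where I would either adapt the transfinite construction from~\cite{gamezmunozsanchezseoane2010} and~\cite{bgLAA} (observing that the subspace it produces is already of the required $\tau_p$-closed form) or else carry out a fresh recursion of length $2^\continuum$ enumerating all potential obstructions to both extendability and pointwise closure.
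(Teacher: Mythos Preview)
Your upper bound and the ``in particular'' reduction are fine, but the lower-bound strategy has a genuine gap: the Bernstein/Jones machinery you propose is incompatible with membership in $\Ext$. Recall from the paper that $\Ext\cap\PES=\PR\cap\PES=\varnothing$; in particular every Jones function lies in $\AC\setminus\Ext$, so a ``Jones-type transfinite recursion'' produces exactly the wrong kind of function. More structurally, every extendable $f\colon\real\to\real$ has perfect roads, and a Bernstein set $B_\xi$ meets every perfect subset of~$\real$. Hence if you allow the values on $B_\xi$ to be an \emph{arbitrary} $\varphi\in\real^{B_\xi}$, you destroy the perfect-road property at almost every point, and the resulting function cannot be extendable. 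The hand-wave at the end---``adapt the construction from \cite{gamezmunozsanchezseoane2010} and \cite{bgLAA}, or run a fresh recursion of length $2^\continuum$''---does not repair this: those constructions do not land in $\Ext$, and no enumeration of obstructions will salvage a scheme whose free coordinates sit on a Bernstein set.

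The paper's proof avoids this obstacle by using a very different structural input, namely \cite{CR}*{corollary~3.4}: there exist $f\in\Ext$ and an $F_\sigma$ meager set $M\subset\real$ such that \emph{any} $g\in\real^\real$ with $g\restriction M=f\restriction M$ is automatically extendable. Thus agreement on a small (meager) set forces extendability, and one has complete freedom on the comeager complement. Pushing this pair $(f,M)$ through $\continuum$ many homeomorphisms yields a partition $\{M_\xi:\xi<\continuum\}$ of $\real$ with associated $f_\xi$, and the space $V(\vec f\,)=\bigl\{\bigcup_\xi t(\xi)(f_\xi\restriction M_\xi):t\in\real^\continuum\bigr\}$ is then trivially $\tau_p$-closed, $2^\continuum$-dimensional, and contained in $\Ext\cup\{0\}$. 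The missing idea in your attempt is precisely this ``extendability is determined on a meager set'' phenomenon; without it there is no evident way to get $2^\continuum$ degrees of freedom inside $\Ext$.
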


\begin{proof}
In \cite{CR}*{corollary 3.4} it is shown that there exists an $f\in \Ext$ and
an $F_\sigma$ first category set $M\subset\real$ such that
    \begin{equation}\label{ext_restricted}
    \text{if $g\in\real^\real$ and $g\restriction M=f\restriction M$, then $g\in \Ext$.}
    \end{equation}
It is easy to see that for any real number $r\neq 0$ the function $r f$ satisfies the same property.

Notice also that there exists a family $\{\,h_\xi\in\real^\real\colon \xi<\continuum\,\}$ of increasing homeomorphisms such that the sets $M_\xi=h_\xi[M]$, $\xi<\continuum$, are pairwise disjoint.
(See, e.g., \cite{CR}*{lemma 3.2}.) It is easy to see that each function $f_\xi=f\circ h_\xi^{-1}$ satisfies~\eqref{ext_restricted} with the set $M_\xi$. Increasing one of the sets $M_\xi$, if necessary, we can also assume that
$\{M_\xi\colon \xi<\continuum\}$ is a partition of $\real$.
Let $\vec f=\la f_\xi\restriction M_\xi\colon \xi<\continuum\ra$ and
define
\begin{equation}\label{spaceV}
V(\vec f\,)=\biggl\{\bigcup_{\xi<\continuum} t(\xi) (f_\xi\restriction M_\xi)\colon t\in\real^\continuum\biggr\}.
\end{equation}
%
It is easy to see that $V(\vec f\,)$ is $2^\continuum$-dimensional $\tau_p$-closed linear subspace\linebreak of~$\Ext$.
\end{proof}

As the cardinality of
the family \(\bor\) of Borel functions from $\real$ to $\real$
is \(\continuum\), Theorem~\ref{th1} easily implies  that \(\Ext\setminus\bor\) is \(2^\continuum\)-lineable:
$\LL(\Ext\setminus\bor)=\left(2^{\continuum}\right)^+$. Actually, we have an even stronger result:

\begin{proposition}\label{prop1}
$\LL_p(\Ext\cap \SES\setminus \bor)=\left(2^{\continuum}\right)^+$.
\end{proposition}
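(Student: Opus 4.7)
The plan is to reuse verbatim the construction of Theorem~\ref{th1}, merely refining the choice of the seed function $f$ so that the resulting $\tau_p$-closed subspace $V(\vec f)$ from~\eqref{spaceV} lies in $(\Ext\cap\SES\setminus\bor)\cup\{0\}$. Concretely, I would aim to produce $f\in\Ext$ and a meager $F_\sigma$ set $M\subset\real$ satisfying~\eqref{ext_restricted} together with two additional conditions:
\begin{itemize}
\item[(a)] $|f^{-1}(y)\cap M\cap G|=\continuum$ for every $y\in\real$ and every nonempty open interval $G\subset\real$;
\item[(b)] no Borel $g\in\bor$ satisfies $g\restriction M=f\restriction M$.
\end{itemize}
Both requirements should be absorbable into the transfinite construction of \cite{CR}*{corollary~3.4} by interleaving $\continuum$-many extra diagonalizations---one per pair $(y,G)$ with $G$ having rational endpoints, and one per pair $(g,c)\in\bor\times(\real\setminus\{0\})$---so as to guarantee both a prescribed surjectivity skeleton for $f\restriction M$ and a defeat of every Borel competitor, while preserving the extendability of~$f$.

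Assuming (a) and (b) have been arranged, the next step is to verify that both properties are inherited by each slice $f_\xi\restriction M_\xi$, where $f_\xi=f\circ h_\xi^{-1}$ and $M_\xi=h_\xi[M]$. This is automatic because $h_\xi$ is a homeomorphism of $\real$; for instance
\[
f_\xi^{-1}(y)\cap M_\xi\cap G=h_\xi\bigl[f^{-1}(y)\cap M\cap h_\xi^{-1}[G]\bigr]
\]
has cardinality $\continuum$ by (a), while a Borel function agreeing with $f_\xi$ on $M_\xi$ would, when precomposed with $h_\xi$, yield a Borel function agreeing with $f$ on $M$, contradicting (b). Both properties also survive nonzero scaling (for (a) replace $y$ by $y/c$; for (b) divide by $c$). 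Consequently, for any nonzero $g=\bigcup_{\xi<\continuum}t(\xi)(f_\xi\restriction M_\xi)\in V(\vec f)$, picking any $\xi_0$ with $t(\xi_0)\neq 0$, the restriction $g\restriction M_{\xi_0}=t(\xi_0)(f_{\xi_0}\restriction M_{\xi_0})$ is strongly everywhere surjective on $M_{\xi_0}$---whence $g\in\SES$---and is not the restriction of any Borel function on $\real$---whence $g\notin\bor$. Since $g\in\Ext$ by Theorem~\ref{th1}, the subspace $V(\vec f)$ witnesses $\LL_p(\Ext\cap\SES\setminus\bor)\geq(2^\continuum)^+$, and the matching upper bound is immediate from $|\real^\real|=2^\continuum$.

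The main obstacle will be the very first step. One must reread \cite{CR}*{corollary~3.4} and confirm that at each stage of its inductive construction of $f\restriction M$ the set of already-specified values has cardinality $<\continuum$ while the set of extendability-compatible choices for the next value still has cardinality $\continuum$; given this much flexibility, meeting the extra requirements (a) and (b) amounts to routine bookkeeping. If the construction does not transparently afford this much room, a mild thickening of $M$---replacing it by a set of the same type but whose inductive skeleton is dense enough in every open interval---should create the required slack without disturbing~\eqref{ext_restricted}.
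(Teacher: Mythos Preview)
Your proposal is correct and follows the same overall architecture as the paper: upgrade the seed pair $(f,M)$ from Theorem~\ref{th1} so that, in addition to~\eqref{ext_restricted}, $f\restriction M$ is $\continuum$-densely $\SES$ and non-Borel, and then observe that the resulting $V(\vec f\,)$ from~\eqref{spaceV} lands in $(\Ext\cap\SES\setminus\bor)\cup\{0\}$. Your verification that (a) and (b) pass to each $f_\xi\restriction M_\xi$ and survive nonzero scaling is exactly what is needed.

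The only divergence is in how you secure (a) and (b). Your primary plan is to reopen the transfinite construction of \cite{CR}*{corollary~3.4} and interleave extra diagonalizations; your fallback is to thicken $M$. The paper goes straight to the fallback, and for a reason worth noting: property~\eqref{ext_restricted} is \emph{monotone} in $M$---if it holds for $M$, it holds automatically for any superset $M'\supset M$ with $f$ redefined arbitrarily on $M'\setminus M$, since $g\restriction M'=f\restriction M'$ implies $g\restriction M=f\restriction M$. Hence one may simply adjoin to $M$ any $\continuum$-dense first category set $N\subset\real\setminus M$ and set $f\restriction N$ to be $\SES$ and non-Borel outright, with no induction and no need to inspect the internals of the \cite{CR} construction. (Incidentally, the parameter $c$ in your diagonalization list for (b) is redundant: if no Borel $g$ agrees with $f$ on $M$, then no Borel $g$ agrees with $cf$ on $M$ either, since $g/c$ is Borel.) Your route would work, but the paper's shortcut is both simpler and more robust.
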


\begin{proof}
The function $f\restriction M$ satisfying \eqref{ext_restricted} may also have the property that
    \begin{equation}\label{ext_restricted2}
    \text{$M$ is $\continuum$-dense in $\real$ and $f\restriction M$ is $\SES$ non-Borel.}
    \end{equation}
Indeed, this can be ensured by enlarging $M$ by a $\continuum$-dense first category set $N\subset\real\setminus M$
and redefining $f$ on $N$ so that $f\restriction N$ is non-Borel and $\SES$.

Now, if $f$ satisfies both \eqref{ext_restricted} and \eqref{ext_restricted2} and $\vec f=\la f_\xi\restriction M_\xi\colon \xi<\continuum\ra$
is defined as in Theorem~\ref{th1}, then the space $V(\vec f\,)$ given in \eqref{spaceV} is as required.
\end{proof}

Notice also that $\Ext\cap \PES=\PR\cap\PES=\varnothing$. In particular, the space
$V$ from Proposition \ref{prop1} is disjoint with $\PES$.

\begin{remark}
Clearly, Theorem~\ref{th1} implies that $\Ext$ is $2^\continuum$-lineable. This result has been also independently proved by T. Natkaniec. (See preprint~\cite{NatkLAA}.) The technique used in \cite{NatkLAA} is similar, but different from that used in the proof of Theorem \ref{th1}.
\end{remark}

Recall, that it is known that $\LL(\AC\setminus \Ext)=\left(2^{\continuum}\right)^+$. See \cite{gamezmunozsanchezseoane2010} or \cite{Surv}*{page~14}. However, we do not know what the exact values of the following cardinals are.

\begin{problem}
Determine the following numbers:
$$\LL_p(\F\setminus\G), \, \LL_u(\F\setminus\G), \text{ and }\LL_u(\F\setminus\G)$$ for
$\F\in\{\Conn\setminus\AC,\D\setminus\Conn,\PC\setminus\D\}$ and $\G\in\{\SCIVP,\CIVP,\PR\}$.
\end{problem}

\begin{problem}
Is it consistent with the axioms of set theory ZFC that either $A(\F)<\mL(\F)$ or $\mL(\F)^+<\LL(\F)$
for any of the classes $\F\in\{\Ext,\AC,\Conn,\D,\PC\}$?
\end{problem}

Notice, that the generalized continuum hypothesis GCH implies that
$A(\F)=\mL(\F)$ and $\mL(\F)^+=\LL(\F)$ for every $\F\in\{\Ext,\AC,\Conn,\D,\PC\}$.

\section{Spaceability of Darboux-like functions on $\real^n$, $n\geq 2$\label{max_extend_Rn}}

Recall (see, e.g., \cite{CJ}*{chart 2} or \cite{STA}) that we have the following strict inclusions, indicated by the arrows,
between the Darboux-like functions from~$\real^n$ to $\real$ for $n\geq 2$.

\begin{figure}[h]
\[\xymatrix @R=.4pc @C=.7pc {&\Conn(\real^n)\ar@{=}[d]&&\AC(\real^n)\\
\CC(\real^n)\ar[r]&\Ext(\real^n)\ar@{=}[d]\ar[r]&\AC(\real^n)\cap\D(\real^n)\ar[ru]\ar[rd]&\\
&\PC(\real^n)&&\D(\real^n)}\]
\caption{Relations between the Darboux-like classes of functions from  \(\real^n\) to  \(\real\), \(n\ge2\).
Arrows indicate strict inclusions.}\label{fig2}
\end{figure}

The proof of the next theorem will be based on the following result \cite{CW}*{Proposition\ 2.7}:

\begin{proposition}\label{Proplca}
Let $n>0$ and let
$f\colon \real^n\rightarrow \real$ be a
peripherally continuous function. Then for any $x_0\in \real^n$ and
any open set $W$ in $\real^n$ containing $x_0$,
there exists an open set
$U\subseteq W$ such that $x_0\in U$ and the restriction of $f$ to
$\bd(U)$ is continuous.
Moreover, given any $\varepsilon>0$, the set $U$ can be chosen
so that $\left|f(x_0)-f(y)\right|<\varepsilon$ for every $y\in \bd(U)$.
\end{proposition}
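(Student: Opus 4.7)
The plan is to dispatch the ``moreover'' clause immediately from the definition of peripheral continuity, and then to refine the resulting set by a nested construction in order to upgrade it to one on which $f$ is continuous on the boundary.

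First, I would apply peripheral continuity at $x_0$ with the given neighborhood $W$ of $x_0$ and with the open neighborhood $V=(f(x_0)-\varepsilon,f(x_0)+\varepsilon)$ of $f(x_0)$; this directly produces an open $U^{(0)}\subseteq W$ with $x_0\in U^{(0)}$ and $|f(y)-f(x_0)|<\varepsilon$ for every $y\in\bd(U^{(0)})$. The entire content of the proposition beyond this is the additional requirement that $f\restriction\bd(U)$ be continuous, so the rest of the argument is aimed at arranging this without disturbing the $\varepsilon$-bound.

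I would then build a decreasing sequence of open neighborhoods of $x_0$,
\[
U^{(0)}\supseteq U^{(1)}\supseteq U^{(2)}\supseteq\cdots,
\]
all contained in $W$ and all containing a fixed small ball $B(x_0,r_0)$. At stage $k+1$, I would cover $\bd(U^{(k)})$ by a locally finite family of open cubes $\{Q_\alpha\}_\alpha$ of diameter less than $2^{-k-1}$, sitting on an aligned grid, with closures contained in $W\setminus\overline{B(x_0,r_0)}$. For each $\alpha$ I would pick a center $y_\alpha\in Q_\alpha\cap\bd(U^{(k)})$ and, by peripheral continuity at $y_\alpha$ applied with the neighborhood $Q_\alpha$ of $y_\alpha$ and tolerance $1/(k+1)$, choose an open $Z_\alpha\subseteq Q_\alpha$ with $y_\alpha\in Z_\alpha$ and $|f-f(y_\alpha)|<1/(k+1)$ on $\bd(Z_\alpha)$. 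Modifying $U^{(k)}$ inside each $Q_\alpha$ so that the new boundary sits on $\bd(Z_\alpha)$ would produce $U^{(k+1)}$, and I would take $U=\bigcap_k U^{(k)}$. Because the modifications never touch $B(x_0,r_0)$, the limit $U$ is open and contains $x_0$; each boundary point $y\in\bd(U)$ lies in a nested chain of cubes of vanishing diameter on whose boundary components $f$ oscillates by less than $1/(k+1)$, forcing $f\restriction\bd(U)$ to be continuous at $y$.

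The hard part will be the bookkeeping of how the boundary evolves under these repeated local excisions and re-attachments, since intersections of decreasing sequences of open sets are in general neither open nor well-controlled. To make the argument honest, the cubes at successive stages must be chosen on compatible aligned grids, and the modifications must be arranged so that at each stage one has $\bd(U^{(k+1)})\subseteq\bigcup_\alpha\bd(Z_\alpha)$, with each new boundary piece in turn swept up by cubes of the next stage. With this care, $\bd(U)$ lies inside $\bigcap_k\bigcup_\alpha\overline{Q_\alpha}$ at stage $k$, which localizes every boundary point of $U$ to a shrinking nest of cubes on whose boundaries the per-stage tolerance $1/(k+1)$ applies, and continuity of $f\restriction\bd(U)$ follows; meanwhile the $\varepsilon$-bound from $U^{(0)}$ is preserved because no modification ever moves a boundary value outside the range it had at the previous stage by more than the successive tolerances $\sum 1/(k+1)$, which can be arranged ab initio to sum to less than the slack in the original bound.
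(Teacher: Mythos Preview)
The paper does not prove this proposition at all: it is quoted verbatim from \cite{CW}*{Proposition~2.7} and used as a black box. So there is no ``paper's own proof'' to compare with; the relevant question is whether your sketch stands on its own.

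It does not, for two concrete reasons. First, taking $U=\bigcap_k U^{(k)}$ does not in general produce an open set. Your remark that the modifications leave $B(x_0,r_0)$ untouched only guarantees that $x_0$ is an interior point of $U$; it says nothing about openness at other points of $U$. A decreasing sequence of open sets that all contain a fixed ball can still have a non-open intersection (e.g.\ $U^{(k)}=B(x_0,r_0)\cup B(y,1/k)$). For a limit construction of this kind to yield an open set one needs the sequence to \emph{stabilize locally}: every point not on the eventual boundary must have a neighborhood on which $U^{(k)}$ is eventually constant. Nothing in your modification step ensures this, and indeed since the cubes at stage $k+1$ cover $\bd(U^{(k)})$ and have positive volume, points near the boundary are perturbed at every stage.

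Second, the $\varepsilon$-bound argument is broken as written: the tolerances $1/(k+1)$ do not sum, so ``can be arranged ab initio to sum to less than the slack'' is false for that choice. This is easily repaired by using geometric tolerances, but it signals that the accounting has not been done. More seriously, the sentence ``modifying $U^{(k)}$ inside each $Q_\alpha$ so that the new boundary sits on $\bd(Z_\alpha)$'' hides the entire difficulty: when adjacent cubes $Q_\alpha,Q_\beta$ overlap, the sets $Z_\alpha,Z_\beta$ are unrelated, and there is no evident way to splice them so that $\bd(U^{(k+1)})\subseteq\bigcup_\alpha\bd(Z_\alpha)$ without picking up pieces of $\bd(Q_\alpha)$ as well. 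You flag this as ``the hard part'', but it is not merely bookkeeping; it is the core of the argument, and the proposal gives no mechanism for it. If you want to pursue this route, you should look at the actual proof in \cite{CW}, which handles exactly these issues.
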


\begin{theorem}\label{th2}
For $n\geq 2$, $\LL_p(\Ext(\real^n))=\LL_u(\Ext(\real^n))=\LL(\Ext(\real^n))=\continuum^+$.
In particular,
the classes $\CC(\real^n)$ and $\Ext(\real^n)$ are ${\continuum}$-spaceable
with respect to the pointwise convergence topology $\tau_p$
but are not $\continuum^+$-lineable.
\end{theorem}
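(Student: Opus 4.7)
The plan is to prove $\continuum^+ \le \LL_p(\Ext(\real^n))$ and $\LL(\Ext(\real^n)) \le \continuum^+$ separately, since together with the trivial chain $\LL_p \le \LL_u \le \LL$ these bounds force all three numbers to equal $\continuum^+$.

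For the lower bound, it suffices to exhibit a $\tau_p$-closed subspace $V_0 \subseteq \CC(\real^n) \subseteq \Ext(\real^n)$ of dimension $\continuum$. I would fix the discrete sequence $p_k := (k,0,\dots,0) \in \real^n$ and continuous bumps $\phi_k$ with $\phi_k(p_k) = 1$ and $\supp\phi_k \subseteq B(p_k, 1/4)$, so that $\{\supp\phi_k\}_{k<\omega}$ is pairwise disjoint and locally finite. Setting
\[
V_0 := \Bigl\{\sum_{k<\omega} t_k \phi_k : (t_k)_{k<\omega} \in \real^\omega\Bigr\},
\]
each element is a locally finite sum of continuous functions, the evaluation map $V_0 \to \real^\omega$, $f \mapsto (f(p_k))_{k<\omega}$, is a linear isomorphism, and $\dim_\real \real^\omega = \continuum$, giving $\dim V_0 = \continuum$. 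Moreover $V_0$ is $\tau_p$-closed: any pointwise limit $f$ of elements of $V_0$ satisfies $f = \sum_k f(p_k) \phi_k \in V_0$ (by disjointness of supports and pointwise convergence of the coefficients).

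For the upper bound, let $V \subseteq \Ext(\real^n) \cup \{0\}$ be any linear subspace and fix the countable dense set $D := \ratnal^n$. The linear restriction map $\Phi\colon V \to \real^D$, $\Phi(f) := f|_D$, lands in a space of $\real$-dimension $\continuum$. Hence $\dim V \le \continuum$ will follow from the injectivity claim: every $f \in \Ext(\real^n)$ with $f|_D \equiv 0$ is identically zero. This claim is the main obstacle. To establish it, suppose $f \in \Ext(\real^n)$ vanishes on $D$ yet $f(x_0) = c \neq 0$ for some $x_0$; set $\e := |c|/2$. Since extendability implies peripheral continuity, Proposition~\ref{Proplca} supplies, for each open $W \ni x_0$, an open $U \subseteq W$ with $x_0 \in U$, $f|_{\bd(U)}$ continuous, and $|f(y) - c| < \e$ throughout $\bd(U)$, so $|f(y)| > \e > 0$ on $\bd(U)$. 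The crux is to choose such $U$ with $\bd(U) \cap D \neq \varnothing$: any $d \in \bd(U) \cap D$ would then force the contradictory equalities $f(d) = 0$ and $|f(d)| > \e$.

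My plan for the rerouting step is to exploit the flexibility built into Proposition~\ref{Proplca} (we may shrink $W$ freely, and the construction of $U$ from the peripheral continuity definition is iterative) together with the fact that for $n \ge 2$ the boundary $\bd(U)$ is an $(n-1)$-dimensional topological separator of $x_0$ from $\real^n \setminus W$; in this codimension-one regime one can steer $\bd(U)$ through any prescribed rational point near $x_0$ by locally perturbing the construction. This steering is precisely what fails for $n = 1$, where $\bd(U)$ degenerates to at most two points and cannot be pushed onto any specified rational --- consistent with the much larger value $\LL_p(\Ext) = (2^\continuum)^+$ obtained in Theorem~\ref{th1}.
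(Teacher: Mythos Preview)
Your lower bound is fine and matches the paper's (both exhibit a $\tau_p$-closed $\continuum$-dimensional subspace of $\CC(\real^n)$).

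The upper bound, however, has a genuine gap. You reduce everything to the claim that any $f\in\Ext(\real^n)$ with $f\restriction\ratnal^n=0$ must vanish identically, and then try to contradict Proposition~\ref{Proplca} by ``steering'' $\bd(U)$ through a rational point. But the logic is backwards. Proposition~\ref{Proplca} hands you \emph{some} open $U\ni x_0$ with $|f-c|<\e$ on $\bd(U)$; you do not get to choose $U$. What you would actually need in order to contradict the proposition is that \emph{every} bounded open $U\ni x_0$ satisfies $\bd(U)\cap\ratnal^n\neq\varnothing$, and this is simply false: for any $x_0$ there are continuum many radii $r$ for which the sphere $S(x_0,r)$ misses $\ratnal^n$ entirely, so balls of those radii are candidate $U$'s whose boundaries avoid your set $D$. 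Put differently, the complement of $\ratnal^n$ is $n$-dimensional, so nothing prevents $\bd(U)$ from lying wholly inside it. Your ``steering'' paragraph does not address this, and I do not see how to repair it; the injectivity of $f\mapsto f\restriction\ratnal^n$ on $\Ext(\real^n)$ is not established.

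The paper sidesteps this by using a finer invariant than $f\restriction D$. For each $f$ and each pair $(x_k,\e_k)$ in an enumeration of $D\times\{2^{-m}:m<\omega\}$, one applies Proposition~\ref{Proplca} to choose an open $U_k^f\ni x_k$ of diameter at most $\e_k$ with $f\restriction\bd(U_k^f)$ continuous, and records $T_f=\la f\restriction\bd(U_k^f):k<\omega\ra$. There are only $\continuum$ possible values of $T_f$, so if $|V|>\continuum$ one finds $f_1\neq f_2$ with $T_{f_1}=T_{f_2}$; then $f=f_1-f_2$ vanishes on $M=\bigcup_k\bd(U_k^{f_1})$. The crucial gain is that $M^c$ is \emph{zero-dimensional} (the $U_k^{f_1}$ form a neighborhood base at every point), so for any bounded open $U\ni x$ the set $\bd(U)$, having dimension at least $n-1\geq 1$, must meet $M$. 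This is exactly the topological fact your approach lacks, and it is what yields the contradiction with Proposition~\ref{Proplca} applied to $f$.
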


\begin{proof}
First, notice that $\LL_p(\CC(\real^n))=\continuum^+$ is justified by the space $\CC_0$
of all continuous functions linear on the interval $[k,k+1]$ for every integer $k\in\integer$.
Indeed, $\CC_0$ is linearly isomorphic to $\real^\integer$.

Now, since
$\continuum^+=\LL_p(\CC(\real^n))\leq\LL_p(\Ext(\real^n))\leq\LL_u(\Ext(\real^n))\leq\LL(\Ext(\real^n))$,
it is enough to show that $\LL(\Ext(\real^n))\leq\continuum^+$, that is, that
$\Ext(\real^n)$
is not $\continuum^+$-lineable.
To see this,
by way of contradiction, assume that there exists a vector space $V \subset \Ext(\real^n)$ of cardinality
greater than $\continuum$.
Fix a countable dense set $D\subset\real^n$ and let
$\la \la x_k,\e_k\ra\colon k<\omega\ra$ be an enumeration of $D\times\{2^{-m}\colon m<\omega\}$.
By Proposition~\ref{Proplca},
for every function $f\in \Ext(\real^n)$ and $k<\omega$ we can choose an open neighborhood $U_k^f$ of $x_k$
of the diameter at most $\e_k$ such that $f\restriction \bd(U_k^f)$ is continuous.
Consider the mapping
$V\ni f\mapsto T_f=\la f\restriction \bd(U_k^f)\colon k<\omega\ra$.
Since its range has cardinality $\continuum$, there are distinct
$f_1,f_2\in V$ with $T_{f_1}=T_{f_2}$.
In particular, $f=f_1-f_2\in V$ is equal zero on the set $M=\bigcup_{k<\omega} \bd(U_k^{f_1})$.
Notice that the complement $M^c$ of $M$ is zero-dimensional.
We will show that $f$ is not extendable, by showing that it does not satisfy Proposition~\ref{Proplca}.

Indeed, since $f_1\neq f_2$, there is an $x\in\real^n$ with $f(x)\neq 0$.
Let $\e=|f(x)|$ and let $W$ be any bounded neighborhood of $x$.
Then, there is no set $U$ as required by Proposition~\ref{Proplca}.

To see this, notice that for any open set $U\subseteq W$ with $x\in U$,
its boundary is of dimension at least 1. In particular,
$M\cap \bd(U)\neq \varnothing$ and, for $y\in M\cap \bd(U)$, we have
$\left|f(x)-f(y)\right|=|f(x)|=\varepsilon$.
\end{proof}

Theorem~\ref{th2} determines the values of the numbers
$\LL_p(\F)$, $\LL_u(\F)$, and $\LL(\F)$ for
$\F\in\{\CC(\real^n),\Ext(\real^n),\Conn(\real^n),\PR(\real^n)\}$ and $n\geq 2$.
In the remainder of this section we will
examine these cardinal numbers for the remaining classes
from the diagram in Figure~\ref{fig2}.
For this, we will need the following fact,
improving a recent result of the second author, see \cite{gamez2011}*{Theorem 2.2}.


\begin{proposition}\label{jones}
$\LL_p(J(\real^n))=\left(2^\continuum\right)^+$ for every $n\geq 1$.
In particular, the families \(J(\real^n)\),
$\PES(\real^n)$,  $\SES(\real^n)$, and $\ES(\real^n)$ are $2^\continuum$-spaceable
with respect to the topology of pointwise convergence.
\end{proposition}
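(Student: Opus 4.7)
The plan is to mirror the construction in Theorem~\ref{th1}: produce a sequence $\vec h=\la h_\xi\restriction M_\xi\colon \xi<\continuum\ra$ whose associated space
\[
V(\vec h\,)=\biggl\{\bigcup_{\xi<\continuum}t(\xi)(h_\xi\restriction M_\xi)\colon t\in\real^\continuum\biggr\}
\]
is a $\tau_p$-closed, $2^\continuum$-dimensional subspace of $\J(\real^n)\cup\{0\}$. The upper bound $\LL_p(\J(\real^n))\le (2^\continuum)^+$ is automatic since $|\real^{\real^n}|=2^\continuum$, so only the lower bound needs work.

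First I would fix a Bernstein-type partition $\{M_\xi:\xi<\continuum\}$ of $\real^n$ with the property that $|P\cap M_\xi|=\continuum$ for every perfect $P\subset\real^n$ and every $\xi<\continuum$; this is a standard transfinite recursion over an enumeration of the (at most $\continuum$ many) perfect subsets of $\real^n$. Next I would enumerate as $\{F_\alpha:\alpha<\continuum\}$ the closed subsets of $\real^n\times\real$ whose projection onto $\real^n$ is uncountable, noting that each such projection contains a perfect set. Then, for each $\xi<\continuum$ separately, I would define $h_\xi\restriction M_\xi$ by transfinite recursion on $\alpha$: at stage $\alpha$ choose $x_\alpha^\xi\in\pi_{\real^n}(F_\alpha)\cap M_\xi$ distinct from all $x_\beta^\xi$ with $\beta<\alpha$ (possible since the intersection has cardinality $\continuum$), pick any $y_\alpha^\xi$ with $(x_\alpha^\xi,y_\alpha^\xi)\in F_\alpha$, and set $h_\xi(x_\alpha^\xi)=y_\alpha^\xi$; finally extend $h_\xi$ arbitrarily on the rest of $M_\xi$.

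With $\vec h$ in hand, verification of the three properties of $V(\vec h\,)$ is routine. It is $\tau_p$-closed because membership amounts to the pointwise-closed condition that $g\restriction M_\xi$ lie on the line $\real\cdot(h_\xi\restriction M_\xi)$ for every $\xi$, and its dimension equals $\dim(\real^\continuum)=2^\continuum$ via the linear isomorphism $t\mapsto\bigcup_\xi t(\xi)(h_\xi\restriction M_\xi)$. For the inclusion $V(\vec h\,)\setminus\{0\}\subset\J(\real^n)$, take a nonzero $g=\bigcup_\xi t(\xi)(h_\xi\restriction M_\xi)$ and fix $\xi$ with $c:=t(\xi)\ne 0$. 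Given a closed $F\subset\real^n\times\real$ of uncountable projection, the dilated set $F':=\{(x,y/c):(x,y)\in F\}$ is also closed with the same projection, hence coincides with some $F_\alpha$ from the enumeration; by construction $(x_\alpha^\xi,h_\xi(x_\alpha^\xi))\in F_\alpha=F'$, so $(x_\alpha^\xi,g(x_\alpha^\xi))=(x_\alpha^\xi,c\,h_\xi(x_\alpha^\xi))\in F$, whence $g\in\J(\real^n)$. The in-particular clause then follows from the chain $\J(\real^n)\subseteq\PES(\real^n)\subseteq\SES(\real^n)\subseteq\ES(\real^n)$.

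The main obstacle I anticipate is the uniformity demanded of the partition in Step~1: a single choice of $\{M_\xi\}$ must simultaneously leave $\continuum$ fresh points available in every $M_\xi\cap\pi_{\real^n}(F_\alpha)$ throughout each of the $\continuum$ separate recursions defining the $h_\xi$. A secondary, more technical, subtlety exploited in the final step is that the family of closed sets with uncountable projection is invariant under nonzero dilation in the $y$-coordinate, which is precisely what allows a single $h_\xi$ to encode the Jones property of \emph{every} nonzero scalar multiple $c\,h_\xi$ at once.
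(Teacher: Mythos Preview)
Your proposal is correct and follows essentially the same approach as the paper: partition $\real^n$ into $\continuum$ pairwise disjoint Bernstein sets $M_\xi$, construct on each a function $h_\xi\restriction M_\xi$ whose graph meets every closed $F\subset\real^n\times\real$ with uncountable projection, and form the space $V(\vec h\,)$ as in \eqref{spaceV}. The paper compresses your Steps~1--3 into the sentence ``All of this can be easily constructed by transfinite induction,'' whereas you spell out the recursion and the dilation trick $F\mapsto\{(x,y/c):(x,y)\in F\}$ explicitly; both the ``obstacles'' you flag are standard and you handle them correctly.
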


\begin{proof}
Let $\{\B_\xi\colon\xi<\continuum\}$ be a decomposition of \(\real^n\) into pairwise disjoint Bernstein sets.
For every $\xi<\continuum$, let $f_\xi\colon B_\xi\to\real$ be such that
\(f_\xi\cap F\neq\varnothing\) for every closed set \(F\subset\real^n\times\real\) whose projection on \(\real^n\) is uncountable.
(All of this can be easily constructed by transfinite induction. See, e.g., \cite{Ci1997}.)
Notice that
    \begin{textequation}\notag
    if $g\in\real^\real$ and $g\restriction M_\xi=r\ f_\xi$ for some
    $\xi<\continuum$ and $r\neq 0$, then $g\in J(\real^n)$.%
    \end{textequation}%
Now, if $\vec f=\la f_\xi\restriction M_\xi\colon \xi<\continuum\ra$ and
$V(\vec f\,)$ is given by (\ref{spaceV}), then
$V(\vec f\,)$ is $2^\continuum$-dimensional $\tau_p$-closed linear subspace of $J(\real^n)$.
\end{proof}

Every function in $\J(\real^n)$ is surjective. In particular, the above result
implies that the class of surjective functions is $2^\continuum$-lineable.
One could also wonder about the lineability of the family of one-to-one functions from $\real^n$
to $\real$, given below.

\begin{remark}
The family of one-to-one functions from $\real^n$ to $\real$ is $1$-lineable but not $2$-lineable.
\end{remark}

\begin{proof}
Clearly the family is $1$-lineable. To see that  is not $2$-lineable, choose
two injective linearly independent functions $f$ and $g$ generating a linear space $Z$.
Take arbitrary $x\neq y$ in $\real^n$
and consider the function $h = f + \alpha g \in Z \setminus \{0\},$ where $\alpha = (f(x)-f(y))/(g(y)-g(x)) \in \real$.
Then, we have $h(x) = h(y)$, so $Z$ contains a function which is not one-to-one.
\end{proof}

Other examples of $1$-lineable but not $2$-lineable sets and, in general, not lineable sets can be found in \cite{Surv,QJM}.

\begin{theorem}
For $n\ge2$, \(\J(\real^n)\subset \AC(\real^n)\setminus\D(\real^n)\).
In particular, the class $\AC(\real^n)\setminus \D(\real^n)$ is $2^\continuum$-spaceable and
$\LL_p(\AC(\real^n)\setminus \D(\real^n))=\left(2^\continuum\right)^+$.
\end{theorem}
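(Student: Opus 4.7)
The plan is to prove the set-theoretic inclusion $\J(\real^n)\subset\AC(\real^n)\setminus\D(\real^n)$ and then piggy-back on Proposition~\ref{jones} for the spaceability conclusion.

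For $\J(\real^n)\subset\AC(\real^n)$, I would argue by contradiction via Stallings' blocking-set reformulation of the definition: $f\notin\AC(\real^n)$ iff there is a closed $K\subset\real^n\times\real$ disjoint from the graph of $f$ but meeting the graph of every continuous function. Fix such a $K$ for a Jones $f$, and let $D$ be the projection of $K$ onto $\real^n$. If $D$ is uncountable, the Jones property directly gives $f\cap K\neq\varnothing$, a contradiction. If $D$ is countable, then each vertical fiber $K_x=\{y\colon(x,y)\in K\}$ must be a proper closed subset of $\real$ (else $(x,f(x))\in K$), and a standard selection argument produces a continuous $g\colon\real^n\to\real$ with $g(x)\notin K_x$ for every $x\in D$, contradicting that $K$ blocks every continuous function. (If $\bigcup_{x\in D}K_x\neq\real$, any constant function in the complement suffices; otherwise one uses a richer continuous family, exploiting that each $\real\setminus K_x$ is open and non-empty and that $D$ is countable.)

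The more delicate inclusion is $\J(\real^n)\cap\D(\real^n)=\varnothing$ for $n\geq 2$, and this is where I expect the main obstacle. The key observation is that every level set $f^{-1}(r)$ of a Jones $f$ is a Bernstein set in $\real^n$: for any perfect $P\subset\real^n$ the closed set $P\times\{r\}$ has the uncountable projection $P$, so $f^{-1}(r)\cap P\neq\varnothing$, and the same reasoning with any $s\neq r$ shows that the complement of $f^{-1}(r)$ also meets $P$. Fixing $r_1\neq r_2$ and putting $B:=f^{-1}(r_1)\cup f^{-1}(r_2)$, the same argument shows that $B$ is again a Bernstein set. The crux is then to show that every Bernstein set $B\subset\real^n$ is \emph{connected} for $n\geq 2$: if $B$ were separated by disjoint open sets $V_1,V_2\subset\real^n$ with $B\subset V_1\cup V_2$ and $B\cap V_i\neq\varnothing$, then $V_1\cap V_2$ is open and disjoint from the Bernstein $B$, hence empty (any non-empty open subset of $\real^n$ contains a perfect set); the closed set $(V_1\cup V_2)^c$ is likewise disjoint from $B$, so it contains no perfect subset and is therefore countable by Cantor--Bendixson. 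Thus $V_1\cup V_2$ equals $\real^n$ minus a countable set, which for $n\geq 2$ is path-connected --- contradicting the decomposition $V_1\sqcup V_2$ into disjoint non-empty open pieces. Hence $B$ is connected, yet $f[B]=\{r_1,r_2\}$ is disconnected, so $f\notin\D(\real^n)$.

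Combining the two inclusions gives $\J(\real^n)\subset\AC(\real^n)\setminus\D(\real^n)$. For the spaceability conclusion, the $2^\continuum$-dimensional $\tau_p$-closed linear subspace $V(\vec f\,)\subset\J(\real^n)$ constructed in the proof of Proposition~\ref{jones} lies automatically inside $\AC(\real^n)\setminus\D(\real^n)$, yielding $\LL_p(\AC(\real^n)\setminus\D(\real^n))\geq(2^\continuum)^+$; the trivial upper bound $\LL_p\leq\dim(\real^{\real^n})^+=(2^\continuum)^+$ and the obvious $2^\continuum$-spaceability complete the proof.
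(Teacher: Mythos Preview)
Your proof is correct and follows the same strategy as the paper's: both reduce to the inclusion $\J(\real^n)\subset\AC(\real^n)$ (which the paper simply asserts as well-known, while you outline the blocking-set argument) together with $\J(\real^n)\cap\D(\real^n)=\varnothing$ via the connectedness of Bernstein sets in $\real^n$ for $n\geq 2$, and then invoke Proposition~\ref{jones}. The only cosmetic difference is that the paper routes the second inclusion through $\PES(\real^n)\cap\D(\real^n)=\varnothing$ (implicitly using the connected Bernstein set $\real^n\setminus f^{-1}(r)$ with image $\real\setminus\{r\}$), whereas you use $f^{-1}(r_1)\cup f^{-1}(r_2)$ with image $\{r_1,r_2\}$; you also supply the proof that Bernstein sets are connected, which the paper states without proof.
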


\begin{proof}
By Proposition~\ref{jones}, it is enough to show that \(\J(\real^n)\subset \AC(\real^n)\setminus\D(\real^n)\).
Clearly, \(\J(\real^n)\subset\AC(\real^n)\cap\PES(\real^n)\) for any $n\geq 1$.
Thus, it is enough to show that $\PES(\real^n)\cap\D(\real^n)=\varnothing$ for $n\ge2$.
But this follows immediately from the fact that, under $n\ge2$, every Bernstein set in $\real^n$ is
connected.
\end{proof}

\begin{remark}
Notice that, since $\AC(\real^n) \subset \Dp(\real^n)$, then,
for $n\geq 2$, we have $\LL_p(\Dp(\real^n) \setminus \D(\real^n))=\left(2^\continuum\right)^+$.
So, $\Dp(\real^n) \setminus \D(\real^n)$ is also $2^\continuum$-spaceable.
\end{remark}

\begin{theorem}
For $n\geq 2$, $\LL_p(\D(\real^n)\setminus\AC(\real^n))=\left(2^\continuum\right)^+$.
In particular, the class $\D(\real^n)\setminus\AC(\real^n)$ is ${2^\continuum}$-spaceable.
\end{theorem}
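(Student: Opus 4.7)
The plan is to adapt the Bernstein-decomposition argument used in Proposition~\ref{jones}. I would partition $\real^n$ into $\continuum$ pairwise disjoint ``thick'' Bernstein sets $\{B_\xi : \xi<\continuum\}$ (each meeting every perfect subset of $\real^n$ in cardinality $\continuum$), and construct for each $\xi$ a partial function $f_\xi\colon B_\xi\to\real$ so that every function $g\colon\real^n\to\real$ with $g\restriction B_\xi=r\cdot f_\xi$, $r\neq 0$, lies in $\D(\real^n)\setminus\AC(\real^n)$. The space
\[
V(\vec f)=\biggl\{\bigcup_{\xi<\continuum}t(\xi)(f_\xi\restriction B_\xi)\colon t\in\real^\continuum\biggr\}
\]
is then a $2^\continuum$-dimensional $\tau_p$-closed linear subspace contained in $(\D(\real^n)\setminus\AC(\real^n))\cup\{0\}$, which, combined with the trivial bound $\LL_p\leq|\real^{\real^n}|^+=(2^\continuum)^+$, establishes the theorem.

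To produce the $f_\xi$'s, I would use transfinite recursion to secure two features. The Darboux-witnessing feature is the requirement that $f_\xi[B_\xi\cap P]=\real$ for every perfect $P\subseteq\real^n$; since for $n\ge 2$ the closure $\overline K$ of any non-degenerate connected $K\subseteq\real^n$ is a non-degenerate continuum (hence a perfect set), this forces $g[\overline K]=\real$ for every nonzero $g\in V(\vec f)$. Upgrading to $g[K]$ connected for a not necessarily closed $K$ would be done using the thick Bernstein property to show $K\cap B_\xi$ has cardinality $\continuum$ whenever $|K|\ge 2$. The non-AC feature is to prescribe in advance a closed set $F^*\subseteq\real^n\times\real$ whose projection onto $\real^n$ is contained in $B_0$ and which blocks continuous functions in the sense of Stallings, and then ensure during the recursion that the graph of $f_0$ is disjoint from $F^*$. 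Any $g\in V(\vec f)$ with $t(0)\neq 0$ then has its graph over $B_0$ disjoint from the blocker $t(0)\cdot F^*$, so $g\notin\AC(\real^n)$ by Stallings' criterion; the case $t(0)=0$ is handled symmetrically by including analogous blockers $F_\xi^*\subseteq B_\xi\times\real$ for each $\xi$.

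The main obstacle is the joint feasibility of the recursion---the Jones-type surjectivity must be enforced while simultaneously keeping the graph of each $f_\xi$ away from a prescribed closed blocker. The dimension hypothesis $n\geq 2$ enters decisively here, both through the passage from non-degenerate connected sets to perfect sets via closure (which has no analogue for $K\subseteq\real$ that is, say, a dense $G_\delta$ of measure zero) and because closed blocking sets for continuity in $\real^n\times\real$ with projection contained in a prescribed meager subset of $\real^n$ are abundant for $n\geq 2$; this is also why the analogous problem on $\real$ remains open (cf.\ Problem~3.5).
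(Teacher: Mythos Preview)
Your Darboux argument has a genuine gap. You claim that the ``thick Bernstein property'' forces $|K\cap B_\xi|=\continuum$ for every non-degenerate connected $K\subseteq\real^n$, but this is false: the complement of $B_\xi$ is itself a Bernstein set, hence (for $n\ge2$) connected, and it is disjoint from $B_\xi$. More to the point, take any $\eta\neq\xi$, any $x_0\in B_\xi$, and set $K=B_\eta\cup\{x_0\}$. Since $B_\eta$ is dense and connected, $K$ is connected with $|K|=\continuum$, yet $K\cap B_\xi=\{x_0\}$. Now choose $g\in V(\vec f\,)$ with $t(\xi)=1$ and $t(\eta)=0$: then $g[K]=\{f_\xi(x_0),0\}$, which is disconnected whenever $f_\xi(x_0)\neq0$. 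So the space $V(\vec f\,)$ you build is \emph{not} contained in $\D(\real^n)$. The underlying obstruction is that there are $2^\continuum$ connected subsets of $\real^n$, and they need not contain perfect sets, so neither a length-$\continuum$ recursion nor the Bernstein property can control them all.

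Your non-$\AC$ mechanism is also problematic. A closed $F^*\subseteq\real^n\times\real$ has $F_\sigma$ (hence analytic) projection, and an analytic subset of a Bernstein set is countable; it is far from clear that a blocking set with countable projection exists, and your remark about projections contained in ``meager'' sets does not apply, since Bernstein sets lack the Baire property.

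The paper's proof avoids both difficulties by a much simpler device: it takes the $2^\continuum$-dimensional $\tau_p$-closed space $W\subset\J$ on $\real$ from Proposition~\ref{jones} and sets $V=\{f\circ\pi_1:f\in W\}$, where $\pi_1\colon\real^n\to\real$ is the first-coordinate projection. Darboux-ness is immediate (projections preserve connectedness, and $\J\subset\D$ on $\real$), while $f\circ\pi_1\notin\AC(\real^n)$ follows from Lipi\'nski's theorem \cite{lipinski}, which says that $f\circ\pi_1\in\AC(\real^n)$ would force $f\in\CC$, impossible for $f\in\J$.
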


\begin{proof}
Let \(\pi_1\colon\real^n\to\real\) the projection of \(\real^n\) on its first coordinate.
Let \(W=V(\vec f\,)\subset\J\) be the vector space of cardinality \(2^\continuum\) build in Proposition~\ref{jones}.
Then the vector space
    \[
    V=\{\,f\circ\pi_1:f\in W\,\}
    \]
is obviously contained in \(\D(\real^n)\) and has dimension \(2^\continuum\). On the other side, if \(f\in W\) then \(f\circ\pi_1\) cannot be in \(\AC(\real^n)\), because then \(f\) would be continuous. (See \cite{lipinski}.) This is not possible, because \(\J\cap\CC=\varnothing\). Therefore, \(V\subset\D(\real^n)\setminus\AC(\real^n)\). To finish, let us remark that the space \(V\) is also closed by pointwise convergence.
\end{proof}

\begin{remark}
Notice that, in $\real^n$ (for every $n \in \mathbb{N}$), we have that $\AC\setminus \Ext$ is $2^{\mathfrak{c}}$-spaceable (since this class contains the Jones functions). Also, in $\real$, $\J \subset \AC \setminus \SCIVP \subset \AC \setminus \Ext$ and, since $\LL_p (\J) = (2^{\mathfrak{c}})^+$, we have (from the previous results) that $$\LL_p(\AC\setminus \Ext) = \LL_u(\AC\setminus \Ext) = (2^{\mathfrak{c}})^+.$$
\end{remark}

\begin{problem} For $n\geq 2$, determine the values of the numbers
$\LL_p(\AC(\real^n)\cap\D(\real^n))$,
$\LL_u(\AC(\real^n)\cap\D(\real^n))$, and
$\LL_p(\AC(\real^n)\cap\D(\real^n))$.
\end{problem}


\begin{bibdiv}
\begin{biblist}

\bib{arongurariyseoane2005}{article}{
  author={Aron, R.M.},
  author={Gurariy, V.I.},
  author={Seoane-Sep\'{u}lveda, J.B.},
  title={Lineability and spaceability of sets of functions on \(\mathbb R\)},
  journal={Proc. Amer. Math. Soc.},
  volume={133},
  date={2005},
  number={3},
  pages={795--803},
}

\bib{bgLAA}{article}{
   author={Bartoszewicz, Artur},
   author={G{\l}{\c{a}}b, Szymon},
   title={Additivity and lineability in vector spaces},
   journal={Linear Algebra Appl.},
   volume={439},
   date={2013},
   number={7},
   pages={2123--2130},
}

\bib{bernal2010}{article}{
  author={Bernal-Gonz\'{a}lez, L.},
  title={Algebraic genericity of strict-order integrability},
  journal={Studia Math.},
  volume={199},
  date={2010},
  number={3},
  pages={279--293},
}

\bib{Surv}{article}{
  author={Bernal-Gonz\'{a}lez, L.},
  author={Pellegrino, D.},
  author={Seoane-Sep\'{u}lveda, J.B.},
  title={Linear subsets of nonlinear sets in topological vector spaces},
  journal={Bull. Amer. Math. Soc. (N.S.)},
  status={DOI: http://dx.doi.org/10.1090/S0273-0979-2013-01421-6},
}

\bib{QJM}{article}{
  author={Botelho, G.},
  author={Cariello, D.},
  author={F\'{a}varo, V.V.},
  author={Pellegrino, D.},
  author={Seoane-Sep\'{u}lveda, J.B.},
  title={On very non-linear subsets of continuous functions},
  journal={Q. J. Math.}
  date={2013},
  status={in press},
}

\bib{Ci1997}{book}{
  author={Ciesielski, K.},
  title={Set theory for the working mathematician},
  publisher={Cambridge Univ. Press},
  date={1997},
  isbn={0-521-59465-0},
}

\bib{STA}{article}{
  author={Ciesielski, K.},
  title={Set-theoretic real analysis},
  journal={J. Appl. Anal.},
  volume={3},
  date={1997},
  number={2},
  pages={143--190},
}

\bib{CJ}{article}{
  author={Ciesielski, K.},
  author={Jastrz{\c {e}}bski, J.},
  title={Darboux-like functions within the classes of Baire one, Baire two, and additive functions},
  journal={Topology Appl.},
  volume={103},
  date={2000},
  number={2},
  pages={203--219},
}

\bib{CM}{article}{
  author={Ciesielski, K.},
  author={Miller, A.W.},
  title={Cardinal invariants concerning functions whose sum is almost continuous},
  journal={Real Anal. Exchange},
  volume={20},
  date={1994/95},
  number={2},
  pages={657--672},
}

\bib{CN}{article}{
  author={Ciesielski, K.},
  author={Natkaniec, T.},
  title={Algebraic properties of the class of Sierpi\'nski-Zygmund functions},
  journal={Topology Appl.},
  volume={79},
  date={1997},
  number={1},
  pages={75--99},
}

\bib{CR}{article}{
  author={Ciesielski, K.},
  author={Rec{\l }aw, I.},
  title={Cardinal invariants concerning extendable and peripherally continuous functions},
  journal={Real Anal. Exchange},
  volume={21},
  date={1995/96},
  number={2},
  pages={459--472},
}

\bib{CW}{article}{
  author={Ciesielski, K.},
  author={Wojciechowski, J.},
  title={Sums of connectivity functions on ${\bf R}^n$},
  journal={Proc. London Math. Soc. (3)},
  volume={76},
  date={1998},
  number={2},
  pages={406--426},
}

\bib{enflogurariyseoane2012}{article}{
  author={Enflo, P.H.},
  author={Gurariy, V.I.},
  author={Seoane-Sep\'{u}lveda, J.B.},
  title={Some Results and Open Questions on Spaceability in Function Spaces},
  journal={Trans. Amer. Math. Soc.},
  status={DOI: http://dx.doi.org/10.1090/S0002-9947-2013-05747-9},
}

\bib{gamez2011}{article}{
  author={G\'{a}mez-Merino, J.L.},
  title={Large algebraic structures inside the set of surjective functions},
  journal={Bull. Belg. Math. Soc. Simon Stevin},
  volume={18},
  date={2011},
  number={2},
  pages={297--300},
}

\bib{gamezmunozsanchezseoane2010}{article}{
  author={G\'{a}mez-Merino, J.L.},
  author={Mu\~{n}oz-Fern\'{a}ndez, G.A.},
  author={S\'{a}nchez, V.M.},
  author={Seoane-Sep\'{u}lveda, J.B.},
  title={Sierpi\'nski-Zygmund functions and other problems on lineability},
  journal={Proc. Amer. Math. Soc.},
  volume={138},
  date={2010},
  number={11},
  pages={3863--3876},
}

\bib{B-G2010}{article}{
  author={G\'{a}mez-Merino, J.L.},
  author={Mu\~{n}oz-Fern\'{a}ndez, G.A.},
  author={Seoane-Sep\'{u}lveda, J.B.},
  title={Lineability and additivity in $\mathbb R^{\mathbb R}$},
  journal={J. Math. Anal. Appl.},
  volume={369},
  date={2010},
  number={1},
  pages={265--272},
}

\bib{jones1942}{article}{
  author={Jones, F.B.},
  title={Connected and disconnected plane sets and the functional equation $f(x)+f(y)=f(x+y)$},
  journal={Bull. Amer. Math. Soc.},
  volume={48},
  date={1942},
  pages={115--120},
}

\bib{jordan1998}{thesis}{
  author={Jordan, F.E.},
  title={Cardinal numbers connected with adding Darboux-like functions},
  type={Ph. D. dissertation},
  institution={West Virginia University, USA},
  date={1998},
}

\bib{lipinski}{article}{
  author={Lipi{\'n}ski, J.S.},
  title={On some extensions of almost continuous functions and of connectivity functions},
  journal={Tatra Mt. Math. Publ.},
  volume={2},
  date={1993},
  pages={15--18},
  issn={1210-3195},
}

\bib{natkaniec1991}{article}{
  author={Natkaniec, T.},
  title={Almost continuity},
  journal={Real Anal. Exchange},
  volume={17},
  date={1991/92},
  number={2},
  pages={462--520},
  issn={0147-1937},
}

\bib{natkaniec1996}{article}{
  author={Natkaniec, T.},
  title={New cardinal invariants in real analysis},
  journal={Bull. Polish Acad. Sci. Math.},
  volume={44},
  date={1996},
  number={2},
  pages={251--256},
}

\bib{NatkLAA}{article}{
  author={Natkaniec, T.},
  title={Algebrability of some families of Darboux like functions},
  status={preprint (2013)},
}

\bib{seoane2006}{thesis}{
  author={Seoane-Sep\'{u}lveda, J.B.},
  title={Chaos and lineability of pathological phenomena in analysis},
  type={Thesis (Ph.D.)},
  institution={Kent State University},
  note={ProQuest LLC, Ann Arbor, MI},
  date={2006},
  pages={139},
  isbn={978-0542-78798-0},
}

\end{biblist}
\end{bibdiv}


\end{document}